\title{Combinatorial principles equivalent to weak induction}
\author{Caleb Davis \and Denis R.~Hirschfeldt \and Jeffry L.~Hirst \and Jake Pardo \and Arno Pauly \and Keita Yokoyama}
\date{December 24, 2018}
\setlist[enumerate]{label=\rm{(\arabic*)}, ref=\arabic*}
\theoremstyle{plain}
\newtheorem{thm}{Theorem}
\newtheorem{lemma}[thm]{Lemma}
\theoremstyle{definition}
\newtheorem*{sert}{$\ert$}
\newtheorem*{sect}{$\ect$}
\newtheorem*{ssrt}{$\srt$}
\newcommand{\nat}{\mathbb N}
\newcommand{\ert}{{\sf{ERT}}}
\newcommand{\ect}{{\sf{ECT}}}
\newcommand{\srt}{{\sf{SRT}^2_2}}
\newcommand{\rca}{{\sf{RCA}}_0}
\newcommand{\rcas}{{\sf{RCA}}_0^*}
\newcommand{\sfi}{{\sf I}}
\newcommand{\szz}{\Sigma^0_0}
\newcommand{\szo}{\Sigma^0_1}
\newcommand{\szt}{\Sigma^0_2}
\newcommand{\szr}{\Sigma^0_3}
\newcommand{\pzo}{\Pi^0_1}
\newcommand{\pzt}{\Pi^0_2}
\newcommand{\pzr}{\Pi^0_3}
\newcommand{\lh}{{\rm{length}}}
\newcommand{\lew}{\leq_{\rm W}}
\newcommand{\ltw}{<_{\rm W}}
\newcommand{\eqw}{\equiv_{\rm W}}
\newcommand{\lpo}{{\sf{LPO}}}
\newcommand{\wcn}{{\sf{C}}_\nat}
\newcommand{\wtcn}{{\sf{TC}}_\nat}
\newcommand{\lesw}{\leq_{\rm sW}}
\newcommand{\ltsw}{<_{\rm sW}}
\newcommand{\minert}{{\sf{min}\ert}}
\newcommand{\minect}{{\sf{min}\ect}}
\newcommand{\isinf}{{\sf{isInfinite}}}
\newcommand{\cmh}{{\sf{C}^\#_{\rm{max}}}}
\begin{document}

\maketitle

\begin{abstract}
We consider two combinatorial principles, $\ert$ and $\ect$.  Both
are easily proved in $\rca$ plus $\szt$ induction.  We give two proofs of
$\ert$ in $\rca$, using different methods to eliminate the use of $\szt$ induction.
Working in the weakened base system $\rcas$, we prove that $\ert$ is
equivalent to $\szo$ induction and $\ect$ is equivalent to $\szt$ induction.
We conclude with a Weihrauch analysis of the principles, showing
$\ert \eqw \lpo^* \ltw \wtcn^* \eqw \ect$.
\end{abstract}

In their logical analysis of vertex colorings of hypergraphs,
Davis, Hirst, Pardo, and Ransom \cite{davisetal} isolate
the combinatorial principle $\ert$, and relate it to the nonexistence
of finite conflict-free colorings for a particular hypergraph.
The principle asserts that for any finite coloring of the natural numbers $\nat$
there is a tail of the coloring such that every color appearing in the tail
appears at least twice in the tail.  $\ert$ stands for {\emph e}ventually
{\emph r}epeating {\emph t}ail, and can be formulated as follows.

\begin{sert}
If $f: \nat \to k$ for some $k \in \nat$,
then there is a $b \in \nat$ such that for all $x \ge b$, there
is a $y \ge b$ such that $x\neq y$ and $f(x) = f(y)$.
\end{sert}

The principle $\ert$ is an immediate consequence of
the principle $\ect$ introduced by Hirst \cite{hirst}.
$\ect$ stands for {\emph e}ventually {\emph c}onstant palette {\emph t}ail,
and asserts that for any finite coloring of $\nat$ there is a tail of the coloring
such that the colors appearing in any final segment of the tail are exactly
those appearing in the entire tail.  A more formal version follows.

\begin{sect}
If $f: \nat \to k$ for some $k \in \nat$,
then there is a $b \in \nat$ such that for all $x \ge b$, there
is a $y >x$ such that $f(x) = f(y)$.
\end{sect}

Both Davis et al.~\cite{davisetal} and Hirst \cite{hirst} work in the usual framework
of reverse mathematics.  In particular, they prove equivalences over the subsystem
of second order arithmetic $\rca$.  This axiom system includes basic natural number
arithmetic axioms, an induction scheme restricted to $\szo$ formulas (denoted $\sfi\szo$),
and a recursive comprehension axiom that essentially asserts that computable 
sets of natural numbers exist.  See Simpson's book \cite{simpson} for more about 
$\rca$ and reverse mathematics.  Theorem~6 of Hirst \cite{hirst} shows that over $\rca$,
$\ect$ is equivalent to $\sfi\szt$, an induction scheme for $\szt$ formulas.
$\rca$ can prove that $\ect$ implies $\ert$, so $\rca$ proves that $\sfi\szt$ implies $\ert$.
As we will see in the next section, $\sfi\szt$ is not necessary in this proof.

\subsubsection*{1.  $\rca$ proves $\ert$}

Davis et al.~\cite{davisetal} show that $\sfi\szt$ is not needed in the proof of $\ert$
by deriving $\ert$ from a restricted form of Ramsey's theorem and applying a
result of Chong, Slaman, and Yang \cite{csy}.  There, Ramsey's theorem
is restricted to stable colorings of pairs, that is to functions $f: [\nat ]^2 \to k$ such that
for all $x$, $\lim_{y \to \infty} f(x,y)$ exists.  Stable Ramsey's theorem for pairs and
two colors is denoted by $\srt$ and can be formalized as follows.

\begin{ssrt}
If $f : [\nat ]^2 \to 2$ is stable, then there is an infinite set $H \subset \nat$ and
a color $c\in \{0,1\}$ such that for all $(x,y) \in [H]^2$, $f(x,y) = c$.
\end{ssrt}

The next result appears as Theorem~11 in Davis et al.~\cite{davisetal}.  The $\rca$ in
parentheses indicates that the proof can be carried out in the formal system $\rca$.  For completeness,
we give a minimal sketch of the proof.

\begin{lemma}\label{h1}
$(\rca)$  $\srt$ implies $\ert$.
\end{lemma}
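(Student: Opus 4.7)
My plan is to prove $\ert$ from $\srt$ by defining a stable $2$-coloring of pairs whose homogeneous set directly exhibits the desired tail. Given $f\colon\nat\to k$, define $g\colon[\nat]^2\to 2$ by $g(x,y) = 0$ (for $x<y$) iff every color appearing in $f$ on the interval $[x,y]$ appears at least twice in $[x,y]$. I would first check that $g$ is stable: for each fixed $x$, since $f$ takes only $k$ values, $g(x,y)$ stabilizes as $y$ grows, with $\lim_y g(x,y) = 0$ precisely when every color appearing in $f([x,\infty))$ appears at least twice in $[x,\infty)$, and $\lim_y g(x,y) = 1$ when some color appears exactly once there.

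Applying $\srt$ then yields an infinite homogeneous set $H$. In the color-$0$ case, taking $b = \min H$ and reading off the stable limit shows every color in $f([b,\infty))$ appears at least twice in $[b,\infty)$, so for each $x\ge b$ the color $f(x)$ has another occurrence $y\ge b$ with $y\ne x$, which is exactly $\ert$.

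The main obstacle will be ruling out the color-$1$ case. There, each $x\in H$ admits a singleton color $c_x$ with unique occurrence at some $z_x\ge x$ in $f([x,\infty))$, extracted from the stable limit together with a pigeonhole on $k$ colors. I would iteratively pick $x_0 < x_1 < \cdots < x_k$ in $H$ with $x_{i+1} > z_{x_i}$ and observe that the $c_{x_i}$ must be pairwise distinct: if $c_{x_i} = c_{x_j}$ for $i<j$, then $z_{x_j}\ge x_j > z_{x_i}$ would be a second occurrence of $c_{x_i}$ in $[x_i,\infty)$, contradicting uniqueness. This produces $k+1$ distinct colors from a $k$-valued function, the required contradiction.
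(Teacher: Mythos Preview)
Your strategy — define a stable $2$-coloring $g$ of pairs detecting whether $[x,y]$ contains a singleton color, apply $\srt$, and in the color-$0$ case take $b=\min H$ — is the same as the paper's (modulo complementing $g$ and using closed rather than half-open intervals). The stability verification and the color-$0$ case are correct: given $x\ge b$, any $h\in H$ with $h>x$ has $g(b,h)=0$, so $f(x)$ has a second occurrence in $[b,h]$.

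Where you diverge from the paper is the color-$1$ elimination, and here there is a formalization concern. You pass to the stable limit to obtain, for each $x\in H$, a color $c_x$ occurring \emph{exactly once} in the tail $[x,\infty)$ at position $z_x$, and then iteratively choose $x_{i+1}>z_{x_i}$ for $i\le k$. But ``$c$ occurs exactly once in $[x,\infty)$'' is a $\pzo$ condition, so each step asks for a witness to a genuine $\szt$ existential; carrying this recursion through $k{+}1$ steps when $k$ may be nonstandard is not justified by $\sfi\szo$ — as written it appeals to something like $\sfi\szt$, which is exactly what one wants to avoid here. The paper instead derives the contradiction purely from the $\szz$ data $g(h_i,h_j)=1$ on the first $3\cdot 2^{k-1}$ elements of $H$: since ``some color occurs exactly once in $[h_i,h_j)$'' is bounded, the combinatorial argument (a recursive halving on the number of colors) lives at the $\szz$ level and goes through in $\rca$. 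Note that your distinctness argument does not transfer directly to bounded intervals — a singleton color in $[h_i,h_{i+1})$ may well reappear later — so repairing the color-$1$ case genuinely requires a different combinatorial idea along the lines the paper cites.
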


\begin{proof}
  Working in $\rca$, let $f: \nat \to k$ be a coloring of $\nat$ as in the statement of $\ert$.  Define
a coloring of pairs, $g:[\nat]^2 \to 2$ by $g(a,b) = 1$ if and only if for some $x$ in the half open
interval of natural numbers $[a,b)$, $f(x)$ appears exactly once in the range of $f$ restricted to
$[a,b)$.  Because $g$ is stable, by $\srt$ there is an infinite homogeneous set $H$.  An argument based on
the first $3\cdot 2^{k-1} $ elements of $H$ shows that $g$ is identically equal to $0$.  Consequently,
the minimum element of $H$ satisfies the requirements of the bound $b$ in the statement of $\ert$.
(For a more complete proof, see Davis et al.~\cite{davisetal}).
\end{proof}

By Corollary 2.6 of Chong, Slaman, and Yang \cite{csy}, $\srt$ cannot prove $\sfi\szt$, so neither
can $\ert$.  Thus although $\rca + \sfi\szt$ proves $\ert$, the full strength of $\sfi\szt$ is not
necessary.  Using a recent conservation result of Patey and Yokoyama \cite{py}, together
with an alternative formalization of $\ert$, we can show that $\rca$ proves $\ert$, completely
eliminating the use of $\sfi\szt$.

\begin{lemma}\label{k1}
$(\rca )$  The following are equivalent.
\begin{enumerate}
\item \label{k1-1}$\ert$.
\item \label{k1-2}$\ert^\prime:$  If $f: \nat \to k$ for some $k \in \nat$, then there is a number $b \in \nat$,
a set $I \subset [0,k)$ consisting of the range of $f$ on $[b, \infty)$, and a witness set
$\{ ( x_i , y_i ) \mid i \in I \}$ such that for every $z \ge b$, we have
$f(z) \in I$, $b \le x_{f(z)} < y_{f(z)}$, and $f(z) = f(x_{f(z)} ) = f(y_{f(z)})$.
\end{enumerate}
\end{lemma}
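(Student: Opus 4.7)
The plan is to prove the two directions separately, with $(\ref{k1-2}) \Rightarrow (\ref{k1-1})$ being essentially immediate and $(\ref{k1-1}) \Rightarrow (\ref{k1-2})$ being the substantive step.

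For $(\ref{k1-2}) \Rightarrow (\ref{k1-1})$, I would apply $\ert'$ to get $b$, $I$, and the witness set $\{(x_i, y_i) : i \in I\}$. Given any $x \ge b$, setting $c := f(x) \in I$ produces two distinct numbers $x_c < y_c$ in $[b, \infty)$ both of $f$-color $c = f(x)$; since $x_c \neq y_c$, at least one of them differs from $x$ itself and serves as the required $y$.

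For the harder direction $(\ref{k1-1}) \Rightarrow (\ref{k1-2})$, I would apply $\ert$ to $f$ to obtain a bound $b$, and then use bounded $\szo$ comprehension to form the finite set $I = \{c < k : \exists z \ge b\; f(z) = c\}$, which is exactly the range of $f$ on $[b, \infty)$. For each $i \in I$, $\szo$-minimization gives the least $x_i \ge b$ with $f(x_i) = i$ (well-defined because $i \in I$). The key observation is that the least $y > x_i$ with $f(y) = i$ also exists: $\ert$ applied at $x_i$ produces some $y \ge b$ with $y \neq x_i$ and $f(y) = i$, and minimality of $x_i$ rules out $y < x_i$, so $y > x_i$. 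Call this $y_i$. By construction, every $z \ge b$ satisfies $f(z) \in I$ and $f(z) = f(x_{f(z)}) = f(y_{f(z)})$ with $b \le x_{f(z)} < y_{f(z)}$.

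The step I expect to require the most care is packaging the family $\{(x_i, y_i) : i \in I\}$ into a single coded finite sequence: this calls for a uniform numerical bound on the $x_i$ and $y_i$ across the (bounded) index set $I$, which is supplied by $\Sigma^0_1$ bounding—provable in $\rca$ from $\sfi\szo$—applied to the $\szo$ family of witness assertions $\exists (x,y)\, [b \le x < y \wedge f(x) = f(y) = i]$ for $i \in I$. Once a bound is in hand, $\Delta^0_0$ comprehension produces the coded sequence. The substantive obligation in writing the proof is confirming that this bounding and coding remain within $\rca$, so that $\sfi\szt$ is not surreptitiously invoked; this is routine since $I$ is a bounded set and the witness predicate is $\szo$.
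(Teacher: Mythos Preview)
Your proposal is correct and follows essentially the same approach as the paper's proof: both directions are handled identically, with $(\ref{k1-2})\Rightarrow(\ref{k1-1})$ immediate and $(\ref{k1-1})\Rightarrow(\ref{k1-2})$ obtained by applying $\ert$, forming $I$ via bounded $\szo$ comprehension, and selecting least witness pairs. Your explicit discussion of $\szo$ bounding for packaging the witness set is more careful than the paper's terse appeal to recursive comprehension, but the underlying argument is the same.
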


\begin{proof}
We will work in $\rca$.
Note that for any $f$, the number $b$ provided by $\ert^\prime$ also satisfies the statement of $\ert$.
Thus $\ert$ follows immediately from $\ert^\prime$.

To prove the converse, let $f: \nat \to k$ and apply $\ert$ to obtain $b$.  The set
$I = \{j<k \mid \exists t ( t \ge b \land f(t) = j) \}$ exists by bounded $\szo$ comprehension,
a consequence of $\rca$ \cite{simpson}*{Theorem~II.3.9}.  For each $i \in I$, there are at least two distinct
values $x_i \ge b$ and $y_i \ge b$ such that $f(x_i ) = f(y_i ) = i$.  Picking the least such witness pair for each $i$,
recursive comprehension proves the existence of the witness set $\{ ( x_i , y_i ) \mid i \in I\}$.
Routine arguments verify that $b$ and this witness set satisfy the requirements of $\ert^\prime$.
\end{proof}

Applying the two lemmas and using a result of Patey and Yokoyama \cite{py}, we can easily prove
$\ert$ in $\rca$, answering a question of Davis et al.~\cite{davisetal}.
An alternative direct proof of Theorem~\ref{k2} is included in the next section in the proof of Theorem~\ref{D3}.  

\begin{thm}\label{k2}
$\rca$ proves $\ert$.
\end{thm}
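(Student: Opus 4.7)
The plan is to combine the two lemmas with the conservation theorem of Patey and Yokoyama. Since $\rca$ suffices to deduce $\ert$ from $\srt$ (Lemma~\ref{h1}) and to deduce $\ert'$ from $\ert$ (Lemma~\ref{k1}), the system $\rca + \srt$, and therefore also $\rca + \mathsf{RT}^2_2$, proves $\ert'$. I will then invoke the Patey--Yokoyama theorem, which says that $\mathsf{RT}^2_2$ is conservative over $\rca$ for sentences of the form $\forall X \,\pzr$, to conclude that $\rca$ itself already proves $\ert'$, and hence $\ert$ via the trivial direction of Lemma~\ref{k1}.

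The step that carries the real content, and which I expect to be the main obstacle, is checking that $\ert'$ can legitimately be presented as a $\forall X\,\pzr$ statement (so that the Patey--Yokoyama conservation applies). This is exactly why Lemma~\ref{k1} was proved: the naive statement $\ert$ has the shape $\forall f\,\exists b\,\forall x\,\exists y\,(\ldots)$, which is only $\forall X\,\szr$ and therefore outside the scope of the conservation theorem. In the reformulation $\ert'$, however, the witness package consisting of $b$, the subset $I\subset[0,k)$, and the finite family $\{(x_i,y_i)\mid i\in I\}$ can be coded by a single natural number $e$, and once $e$ is given the remaining condition ``for every $z\geq b$\ldots'' is quantifier-free in $f$, $k$, $e$. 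Thus, for each function $f$ viewed as a set parameter, $\ert'(f)$ has the form $\exists e\,\forall z\,\phi_0$, which is $\szt$, and therefore $\pzr$. Taking the universal closure over $f$ and $k$ places $\ert'$ inside $\forall X\,\pzr$.

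With the complexity verified, the argument becomes a short chain. First I would recall the two lemmas to get $\rca + \srt \vdash \ert'$, and use the fact that $\mathsf{RT}^2_2$ implies $\srt$ in $\rca$ to conclude $\rca + \mathsf{RT}^2_2 \vdash \ert'$. Next I would cite the Patey--Yokoyama conservation result to transfer this provability down to $\rca$. Finally, the implication $\ert' \to \ert$, which is immediate and was already noted in the proof of Lemma~\ref{k1}, yields $\rca \vdash \ert$.
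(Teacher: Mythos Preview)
Your proposal is correct and follows essentially the same route as the paper: derive $\ert'$ in $\rca+\srt$ via Lemmas~\ref{h1} and~\ref{k1}, verify that $\ert'$ has the form $\forall X\,\varphi(X)$ with $\varphi\in\pzr$ (indeed $\szt$, as you observe), apply the Patey--Yokoyama conservation theorem, and then use the trivial direction of Lemma~\ref{k1}. The only cosmetic difference is that the paper applies the conservation result directly to $\rca+\srt$ (Theorem~7.4 of \cite{py} already covers $\srt$), so your intermediate step of passing to $\rca+\mathsf{RT}^2_2$ is unnecessary, though harmless.
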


\begin{proof}
$(\rca )$  By Lemma~\ref{h1}, $\rca + \srt$ proves $\ert$.  Thus, by Lemma~\ref{k1},
$\rca + \srt$ proves $\ert ^\prime$.  By Theorem~7.4 of Patey and Yokoyama \cite{py},
$\rca + \srt$ is a conservative extension of $\rca$ for formulas of the form
$\forall X \varphi (X)$, where $\varphi$ is $\pzr$.  $\ert^\prime$ has this form, so $\rca$
proves $\ert^\prime$.  By Lemma~\ref{k1}, $\rca$ proves $\ert$.
\end{proof}

The conservation result of Patey and Yokoyama is a powerful tool for eliminating the
use of $\szt$ induction in the proofs of combinatorial principles.  Their result actually holds
for Ramsey's theorem for pairs and two colors, so it is not necessary to limit ourselves
to stable colorings.  The principle $\ert^\prime$ can be formalized in the form $\forall X \varphi(X)$
where $\varphi$ is $\szt$.  Clearly, we have made use of less than the full strength of this technique
in our example.  On the other hand, if $\ert$ is directly formalized in the form $\forall X \theta (X)$,
the formula $\theta$ is $\szr$, so Patey and Yokoyama's result does not apply.  Lemma~\ref{k1} is
a necessary step in the argument.

\subsubsection*{2.  Reverse mathematics:  $\ert$ is $\sfi\szo$ and $\ect$ is $\sfi\szt$}

In this section, we prove that our combinatorial principles are equivalent to induction schemes
over the weakened base system $\rcas$.  The axioms of $\rcas$ are those of $\rca$ less the
$\szo$ induction scheme, with the addition of a $\szz$ induction scheme and function symbols and axioms for
integer exponentiation.  The subsystem is described in Chapter~X of Simpson's book \cite{simpson}.
The following lemma incorporates results from an early work of Simpson and Smith \cite{ss}.  Note the
change in the base system at the beginning of the statement of the lemma.

\begin{lemma}\label{D1}
$(\rcas )$ The following are equivalent.
\begin{enumerate}
\item  $\sfi\szo$, the $\szo$ induction scheme.\label{D1-1}
\item  The universe of total functions is closed under primitive recursion.\label{D1-2}
\item  Bounded $\szo$ comprehension.\label{D1-3}
\item  Bounded $\pzo$ comprehension.\label{D1-4}
\end{enumerate}
\end{lemma}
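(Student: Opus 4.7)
The plan is to run the cycle (\ref{D1-1}) $\Rightarrow$ (\ref{D1-2}) $\Rightarrow$ (\ref{D1-3}) $\Leftrightarrow$ (\ref{D1-4}) $\Rightarrow$ (\ref{D1-1}), invoking standard reverse-mathematical techniques together with results from the Simpson--Smith paper cited. The easier links are (\ref{D1-1}) $\Rightarrow$ (\ref{D1-2}), (\ref{D1-3}) $\Leftrightarrow$ (\ref{D1-4}), and (\ref{D1-4}) $\Rightarrow$ (\ref{D1-1}); the main obstacle is (\ref{D1-2}) $\Rightarrow$ (\ref{D1-3}).

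For (\ref{D1-1}) $\Rightarrow$ (\ref{D1-2}): given total $g$ and $h$ defining $f(0) = g$ and $f(n+1) = h(n, f(n))$, apply $\sfi\szo$ to the $\szo$ formula asserting existence of a finite sequence $\langle s_0, \ldots, s_n \rangle$ with $s_0 = g$ and $s_{i+1} = h(i, s_i)$ for $i < n$. The exponentiation available in $\rcas$ codes such sequences; the inductive step appends $h(n, s_n)$; and recursive comprehension extracts $f$ from the traces. For (\ref{D1-3}) $\Leftrightarrow$ (\ref{D1-4}), take complements within $[0, c)$, which exist by $\szz$ comprehension, to swap $\szo$ and $\pzo$ bounded comprehension. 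For (\ref{D1-4}) $\Rightarrow$ (\ref{D1-1}), given a $\szo$ formula $\varphi$ with $\varphi(0)$ and $\forall n(\varphi(n) \to \varphi(n+1))$, suppose for contradiction that $\neg\varphi(c)$ for some $c$. Form $T = \{n \leq c : \neg\varphi(n)\}$ using (\ref{D1-4}); $T$ is nonempty. Take its least element $m$ by the $\szz$ least-element principle available in $\rcas$. Since $\varphi(0)$, we have $m > 0$, so $\varphi(m - 1)$ holds, giving $\varphi(m)$ by the induction step, contradicting $m \in T$.

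The main obstacle is (\ref{D1-2}) $\Rightarrow$ (\ref{D1-3}). Given a $\szo$ formula $\varphi(n) \equiv \exists k\, \theta(n, k)$ with $\theta$ bounded, together with a bound $b$, the plan is to compute $\{n < b : \varphi(n)\}$ via primitive recursion. Define $G$ by primitive recursion so that $G(s)$ codes the set $\{n < b : \exists k \leq s\, \theta(n, k)\}$; each $G(s)$ is well-defined by the exponentiation in $\rcas$, and the sequence is nondecreasing in $s$ with cardinality bounded by $b$, hence stabilizes. The delicate step is to extract the stabilized set in $\rcas$ without tacitly invoking $\sfi\szo$; I would follow the Simpson--Smith approach, using an auxiliary primitive recursion to locate, for each of the at most $b$ possible new elements, the stage at which it first enters $G(s)$, and then assemble the target set from these stage data using recursive comprehension.
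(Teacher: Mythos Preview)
Your approach aligns with the paper's, which simply cites Lemma~2.5 of Simpson and Smith \cite{ss} for the equivalence of (\ref{D1-1}), (\ref{D1-2}), and (\ref{D1-3}) and then notes that (\ref{D1-3}) $\Leftrightarrow$ (\ref{D1-4}) follows by taking complements via recursive comprehension. Your added sketches for (\ref{D1-1}) $\Rightarrow$ (\ref{D1-2}), (\ref{D1-3}) $\Leftrightarrow$ (\ref{D1-4}), and (\ref{D1-4}) $\Rightarrow$ (\ref{D1-1}) are correct and go beyond what the paper spells out. One caution on your (\ref{D1-2}) $\Rightarrow$ (\ref{D1-3}) outline: the auxiliary recursion you describe, locating for each $n<b$ the first stage at which $n$ enters $G(s)$, is undefined for those $n$ that never enter, so as written it does not produce a total function and cannot be fed to primitive recursion; extracting the stabilized value of $G$ in $\rcas$ requires a different device. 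Since you, like the paper, ultimately defer this step to Simpson--Smith, the argument remains sound by citation.
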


\begin{proof}
The equivalence of items (\ref{D1-1}), (\ref{D1-2}), and (\ref{D1-3}) are included in Lemma~2.5 of the article of
Simpson and Smith \cite{ss}.  Recursive comprehension proves the existence of complements of sets, so items
(\ref{D1-3}) and (\ref{D1-4}) are also equivalent.
\end{proof}

For our arguments, it is useful to formalize the concept of a partial function.  Working in $\rcas$, we can define
a code for a finite partial function as a set of ordered pairs $f \subset [0,k)\times \nat$ such that for all $i$, $n$, and $m$,
if $(i,n)\in f$ and $(i,m) \in f$, then $n=m$.  Using this notion, we can state another equivalent form of $\sfi\szo$.

\begin{lemma}\label{D2}
$(\rcas )$ The following are equivalent:
\begin{enumerate}
\item \label{D2-1} $\sfi\szo$.
\item \label{D2-2} Finite partial functions have bounded ranges.  That is, if $f\subset k \times \nat$ is a finite partial
function, then
\[
\exists b\, \forall i<k\, \forall n ((i,n) \in f \to n \le b ).
\]
\end{enumerate}
\end{lemma}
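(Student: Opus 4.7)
The plan is to establish the two directions separately, using Lemma~\ref{D1} to convert $\sfi\szo$ into more workable forms.

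For $(\ref{D2-1}) \Rightarrow (\ref{D2-2})$, I would invoke closure under primitive recursion (item~\ref{D1-2} of Lemma~\ref{D1}). Given a finite partial function $f \subset [0, k) \times \nat$, define a running-maximum function $g \colon [0, k] \to \nat$ by primitive recursion with parameter $f$: set $g(0) = 0$, and $g(i+1) = \max(g(i), n)$ when there is a (necessarily unique) $n$ with $(i, n) \in f$, and $g(i+1) = g(i)$ otherwise. A short induction on $i$ then shows that $g$ is non-decreasing and that $g(i+1) \ge n$ whenever $(i, n) \in f$, so $g(k)$ is the required bound on the range of $f$.

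For $(\ref{D2-2}) \Rightarrow (\ref{D2-1})$, the cleanest route is to derive bounded $\szo$ comprehension (item~\ref{D1-3} of Lemma~\ref{D1}), which by that lemma is equivalent to $\sfi\szo$. Given a $\szo$ formula $\psi(i) \equiv \exists m \, \theta(i, m)$ with $\theta$ in $\szz$, and a bound $k$, I would form the least-witness partial function
\[
f = \{(i, m) : i < k \land \theta(i, m) \land \forall m' < m \, \lnot \theta(i, m')\}.
\]
The defining condition is $\szz$ (the quantifier over $m'$ is bounded), so $f$ exists in $\rcas$ by recursive comprehension, and it is a finite partial function with domain contained in $[0, k)$. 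Applying~(\ref{D2-2}) yields a bound $b$ on its range. Since any $i < k$ with $\psi(i)$ true has its least witness recorded in $f$, we get
\[
\{i < k : \psi(i)\} = \{i < k : \exists m \le b \, \theta(i, m)\},
\]
a bounded $\szz$ set, which exists by recursive comprehension.

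The main obstacle I foresee is in the second direction: verifying that the least-witness set $f$ is genuinely $\Delta^0_1$-definable in $\rcas$, and that its range being bounded really does suffice to bound the least witnesses of $\psi$ on $[0, k)$. The forward direction is essentially formal once Lemma~\ref{D1} is in hand, though one must check that the step of the primitive recursion is itself a total function with $\Delta^0_1$ graph in the parameter $f$.
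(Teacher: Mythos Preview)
Your reverse direction $(\ref{D2-2}) \Rightarrow (\ref{D2-1})$ is essentially the paper's proof. Your worry there is unfounded: the defining condition for the least-witness set is $\szz$ (all quantifiers are bounded), so $f$ exists already by $\szz$ comprehension in $\rcas$, and if $\psi(i)$ holds then its least witness is literally the second coordinate of the unique pair in $f$ with first coordinate $i$, hence bounded by $b$.

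The genuine gap is in your forward direction $(\ref{D2-1}) \Rightarrow (\ref{D2-2})$, and it is precisely the issue you flag at the end but then dismiss as ``essentially formal.'' The step function $H(i,v)$ of your primitive recursion must decide whether $i$ lies in the domain of $f$; this is the $\szo$ condition $\exists n\,((i,n)\in f)$, whose negation is properly $\pzo$, so the case split is not $\Delta^0_1$ in the parameter $f$ alone. Hence $H$ need not exist as a total function in a model of $\rcas$, and Lemma~\ref{D1}(\ref{D1-2}) cannot be applied directly. The paper resolves this by first invoking bounded $\szo$ comprehension (Lemma~\ref{D1}(\ref{D1-3})) to form the domain $D=\{i<k\mid \exists n\,(i,n)\in f\}$; with $D$ as an additional parameter one can define by recursive comprehension a total extension $f'$ of $f$ (setting $f'(i)=0$ for $i\notin D$, and searching for the value when $i\in D$, a search now guaranteed to terminate), and only then carry out a primitive recursion on $f'$. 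The paper happens to use a running sum rather than a running maximum, but that difference is immaterial; the missing ingredient in your argument is the preliminary extraction of $D$.
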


\begin{proof}
To prove (\ref{D2-1}) implies (\ref{D2-2}), working in $\rcas$, assume $\sfi\szo$ and let $f$ be a finite partial function
contained in $k \times \nat$.  By Lemma~\ref{D1}, we may apply bounded $\szo$ comprehension and find the
set $D = \{ x< k \mid \exists y (x,y) \in f\}$.  By recursive comprehension, there is a total function
$f^\prime$ satisfying
\[
f^\prime (n)= 
\begin{cases}
\min\{ m \mid (n,m) \in f \} &\text{if}~n\in D\\
0&\text{otherwise.}
\end{cases}
\]
By Lemma~\ref{D1}, we may apply primitive recursion to find the summation function
$g(n) = \sum_{i=0}^n f^\prime (i)$.  The integer $g(k-1)$ is a suitable bound for the range of $f$.

To prove the converse, we will use (\ref{D2-2}) to prove bounded $\szo$ comprehension.  Let $\theta (m,n)$
be a $\szz$ formula and fix a bound $k$.  We will prove that the set $\{ m<k \mid \exists n \theta (m,n)\}$ exists.
Using recursive comprehension, we can find the set of pairs
\[
f = \{ (m,n) \mid \theta (m,n) \land \forall y<n \, \neg \theta (m,y ) \}.
\]
Note that $f$ is a partial function from $k$ into $\nat$.  By (\ref{D2-2}), there is a bound $b$ for the range of $f$.
Thus, for all $m<k$, $\exists n \theta(m,n)$ if and only if $\exists n\le b \, \theta (m,n)$.  So
$\{m<k \mid \exists n \theta (m,n) \}$ is identical to $\{ m<k \mid \exists n\le b\, \theta (m,n)\}$, and its existence follows from
recursive comprehension.
\end{proof}

We can now show that $\ert$ is equivalent to $\sfi\szo$ over $\rcas$.  Because $\rcas$ plus $\sfi\szo$ is $\rca$, this
provides a direct proof of $\ert$ in $\rca$, without the use of conservation results.
Following the proof of the theorem, we will comment on this as
a technique for eliminating $\sfi\szt$ in proofs of combinatorial results.

\begin{thm}\label{D3}
$(\rcas )$  The following are equivalent.
\begin{enumerate}
\item\label{D3-1}  $\sfi\szo$.
\item\label{D3-2}  $\ert$.
\item\label{D3-3}  $\forall j \ert(j)$.  Here $\ert(j)$ generalizes $\ert$, requiring that at or after the bound $b$, any value
of $f$ that appears must
appear at least $j$ times.
\end{enumerate}
\end{thm}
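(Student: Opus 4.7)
The plan is to prove the cycle $(\ref{D3-1}) \Rightarrow (\ref{D3-3}) \Rightarrow (\ref{D3-2}) \Rightarrow (\ref{D3-1})$. The implication $(\ref{D3-3}) \Rightarrow (\ref{D3-2})$ is immediate, since $\ert$ is the case $j = 2$ of $\ert(j)$.

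For $(\ref{D3-2}) \Rightarrow (\ref{D3-1})$, I will apply Lemma~\ref{D2} and instead deduce from $\ert$ that every finite partial function has a bounded range. Given $f \subset k \times \nat$ of the form in Lemma~\ref{D2}, define $g: \nat \to k+1$ by $g(n) = i$ when some (necessarily unique) $i < k$ has $(i, n) \in f$, and $g(n) = k$ otherwise. This is a $\szz$ definition, so $g$ exists in $\rcas$, and each color $i < k$ is attained by $g$ at most once since $f$ is a partial function. Applying $\ert$ to $g$ yields a bound $b$; for any $x \geq b$ the value $g(x)$ must recur at some $y \neq x$ with $y \geq b$, which rules out $g(x) < k$. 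Thus $g(x) = k$ for all $x \geq b$, meaning $(i, x) \notin f$ whenever $x \geq b$ and $i < k$, so $b$ bounds the range of $f$.

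The main work is in $(\ref{D3-1}) \Rightarrow (\ref{D3-3})$. Given $f: \nat \to k$ and $j$, the plan is to construct recursively a decreasing chain $I_0 \supseteq I_1 \supseteq \cdots$ of subsets of $\{0, \dots, k-1\}$ together with non-decreasing thresholds $b_0 \leq b_1 \leq \cdots$, starting from $I_0 = \{0, \dots, k-1\}$ and $b_0 = 0$. At stage $n$, bounded $\szo$ comprehension (Lemma~\ref{D1}) yields
\[
I_{n+1} = \{ i \in I_n \mid i \text{ occurs at least } j \text{ times in } [b_n, \infty) \}.
\]
For each $i \in I_n \setminus I_{n+1}$, there are strictly fewer than $j$ occurrences of $i$ in $[b_n, \infty)$; sending $(i, \ell) \in (I_n \setminus I_{n+1}) \times j$ to the $\ell$-th such occurrence when it exists is therefore a finite partial function, and Lemma~\ref{D2} supplies a bound on its range, from which I read off $b_{n+1}$.

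The main obstacles are to keep this construction inside $\rcas + \sfi\szo$ and to argue termination. Closure under primitive recursion (Lemma~\ref{D1}) together with $\szo$ minimization handle the former: each step is carried out using bounded $\szo$ comprehension for $I_{n+1}$ and Lemma~\ref{D2} for $b_{n+1}$. For termination, an easy induction on $n$ maintains the invariant that every occurrence of a color outside $I_n$ is strictly below $b_n$. Since $|I_n|$ is a non-increasing sequence of naturals bounded by $k$, it stabilizes by stage $n = k$, so $I_k = I_{k+1}$. Taking $b = b_k$, the invariant forces $f(x) \in I_k = I_{k+1}$ for every $x \geq b$, and membership in $I_{k+1}$ guarantees at least $j$ occurrences of $f(x)$ in $[b, \infty)$. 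This yields $\ert(j)$ and, as a byproduct, the direct proof of $\ert$ in $\rca$ promised earlier in the paper.
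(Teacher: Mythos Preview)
Your treatment of $(\ref{D3-2}) \Rightarrow (\ref{D3-1})$ is correct and is the paper's contrapositive argument stated in the forward direction; $(\ref{D3-3}) \Rightarrow (\ref{D3-2})$ is indeed trivial.

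The problem is in $(\ref{D3-1}) \Rightarrow (\ref{D3-3})$. Your appeal to closure under primitive recursion (Lemma~\ref{D1}(\ref{D1-2})) does not apply here: that principle concerns iteration of a total function already present in the model, whereas your step $(I_n, b_n) \mapsto (I_{n+1}, b_{n+1})$ is not such a function. Deciding whether a given color lies in $I_{n+1}$ is a genuine $\szo$ question about $f$ with no recursive decision procedure, so forming $I_{n+1}$ invokes bounded $\szo$ comprehension---a set-existence principle, not a computation you can feed into a primitive recursion. Each individual stage can certainly be carried out in $\rca$, but iterating $k$ times when $k$ may be nonstandard requires an internal induction, and the natural statement ``a valid partial run of length $n$ exists'' is not $\szo$: the clauses ``$i \notin I_{m+1}$ implies $i$ has fewer than $j$ occurrences in $[b_m,\infty)$'' and ``colors in $I_m\setminus I_{m+1}$ do not appear past $b_{m+1}$'' carry $\pzo$ content. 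So as written the construction is not justified inside $\rcas + \sfi\szo$, and ``$\szo$ minimization'' does not rescue it, since the graph of the step is not $\szo$ either.

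The paper proceeds in a different order that sidesteps this issue. For $(\ref{D3-1}) \Rightarrow (\ref{D3-2})$ it uses one application of bounded $\szo$ comprehension followed by one of bounded $\pzo$ comprehension to build, in a single pass, a finite set $T$ of non-repeating color sequences whose last color never reappears past its first witnessed occurrence; the bound $b$ is then read off from a suitable maximal element of $T$, with no iteration over stages. For $(\ref{D3-2}) \Rightarrow (\ref{D3-3})$ it gives a one-shot reduction: given $f:\nat\to k$ and $j$, define $g(n) = (\,f(n),\ |\{i<n : f(i)=f(n)\}| \bmod j\,)$, view $g$ as a $(k\cdot j)$-coloring, and apply $\ert$ to $g$; the resulting bound already serves for $\ert(j)$ applied to $f$. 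Both moves avoid having to iterate a step that is not given by a total recursive function.
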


\begin{proof}
To show that (\ref{D3-1}) implies (\ref{D3-2}), we could simply cite Theorem~\ref{k2}. 
We present a direct proof using sequential applications of bounded comprehension that will be
adapted to prove Theorem~\ref{W1} below.
Working in $\rcas$, assume $\sfi\szo$.  By Lemma~\ref{D1},  we may apply bounded $\szo$ comprehension.
We will prove $\ert$ for $f: \nat \to k$.  By bounded $\szo$ comprehension, we can find the set of (codes for) non-repeating
finite sequences of values less than $k$ such that the colors appear in this order somewhere in the range of $f$.  More formally,
bounded $\szo$ comprehension proves the existence of a set $S$ of (codes for) sequences such that $\sigma \in S$ if and only if
\begin{list}{$\bullet$}{}
\item $\lh (\sigma ) < k$,
\item  $\forall i < \lh (\sigma ) \, (\sigma (i) < k)$,  
\item $\forall i < \lh (\sigma ) \, \forall j < \lh (\sigma) \, (\sigma (i ) = \sigma(j) \to i=j)$,
\end{list}
and there is a finite witness sequence $\tau$ such that
\begin{list}{$\bullet$}{}
\item  $\lh (\sigma ) = \lh (\tau )$,
\item $\forall i < \lh (\tau ) \, \forall j < \lh (\tau) \, (i<j \to \tau(i)<\tau (j))$,
\item $\forall i < \lh (\tau) (f(\tau(i)) = \sigma (i))$.
\end{list}
By Lemma~\ref{D1}, we may also use bounded $\pzo$ comprehension.  Using $S$ as a parameter and applying
bounded $\pzo$ comprehension, we can find a subset $T$ of $S$ consisting of the empty sequence and all those
sequences $\sigma$ such that the first time the colors appear in the specified order, the last color never reappears.
When selecting the first witness sequence, we assume that for sequences differing in a single entry, the
sequence with the smaller entry appears first.
Thus, $\sigma$ is in $T$ if and only if $\sigma $ is empty, or $\sigma \in S$ and for the first witness sequence $\tau$ for $\sigma$ and
any $j > \lh(\tau ) -1$, $f(j) \neq \sigma(\lh (\tau ) - 1)$.  $T$ is a subset of the finite set of non-repeating sequences of numbers less
than $k$, so $\rcas$ can answer questions about whether or not sequences are in $T$.  In particular,
we can define a subset $T_0 \subset T$ of sequences $\sigma$ such that no extension of $\sigma$ is in $T$
and every initial segment of $\sigma$ is in $T$.  Suppose $\sigma_0\in T_0$.
If $\sigma_0$ is empty, then
every color in the range of $f$ appears at least twice, and $b=0$ is the desired bound for $\ert$.  If $\sigma_0$ is nonempty,
let $\tau_0$ be the first witness sequence for $\sigma_0$, and define $b = \tau_0 (\lh (\sigma_0 )-1) + 1$.  Because $\sigma_0$ is in $T$ and
none of its extensions are,
every color appearing at or after $b$ must appear at least twice.  Summarizing,
the bound $b$ satisfies the requirements of $\ert$.

Next, we will show that (\ref{D3-2}) implies (\ref{D3-1}) by proving the contrapositive.  Suppose $\rcas$ holds and
$\sfi\szo$ fails.  By Lemma~\ref{D2}, there is a finite partial function $g \subset k \times \nat$ with an unbounded range.
Define the function $f : \nat \to k+1$ by
\[
f(n)=
\begin{cases}
j&{\text {if~}} j<k \land (j,n)\in g\\
k&\text{otherwise.}
\end{cases}
\]
The function $f$ exists by recursive comprehension, and for any $b$ there is an $n>b$ such that $f(n)<k$ and the value of $f(n)$
appears only once in the range of $f$.  Thus no $b$ can be a bound for $\ert$ applied to $f$, and $\ert$ fails.

Because (\ref{D3-2}) is a special case of (\ref{D3-3}), to complete the proof of the theorem, it suffices to show in $\rcas$ that
$\forall j \ert(j)$ follows from $\ert$.  By our previous work, $\sfi\szo$ follows from $\ert$, so we may work in $\rca$.
Fix $j$ and suppose $f: \nat \to k$.  Our goal is to find a $b$ such that every color appearing at or after 
$b$ appears at least $j$ times in the range of $f$ at or after $b$.  Define $g: \nat \to k\times j$ by setting
\[
g(n) = ( f(n) , {\text{mod}}_j |\{ i< n \mid f(i) = f(n) \}|).
\]
Intuitively, if $f$ takes the value $i$ at locations $x_0, x_1, \dots x_j$ (and nowhere before or in between),
then $g(x_0) = (i,0)$, $g(x_1) = (i,1),$ $\dots,$ $g(x_{j-1}) = (i,j-1)$,
and $g(x_j) = (i,0)$.  Using a bijection between $k\times j$ and the natural numbers less than $k\cdot j$, we can view $g$ as a function
from $\nat$ into $k\cdot j$.
Let $b$ be a bound for $\ert$ applied to $g$.  Suppose color $i$ appears at or after $b$ in the range of $f$.  Let $x_0$ be the first
such location.  Then for some $m<j$, $g(x_0) = (i,m)$.  Note that $x_0$ is the first location at or after $b$ where
$g$ takes this value.  By $\ert$ for $g$, there is an $x_1>x_0$ such that $g(x_1)= g(x_0)$.  By the definition of $g$,
there are at least $j$ places in $[x_0, x_1 )$ where $f$ takes the value $i$.  Thus $b$ is a bound for $\ert(j)$ for $f$.
This completes the proof of (\ref{D3-3}) from (\ref{D3-2}) and the proof of the theorem.
\end{proof}

For use in the proof of Theorem~\ref{minert}, note that the set $T$ defined in the preceding proof can be used
to compute the minimum bound satisfying $\ert$.  Because we are making a computability theoretic argument,
we are not restricted to $\rca^*$.  If every color in the range of $f$ appears at least twice, then no sequences
of length one appear in $T_0$, so $\sigma_0$ is the empty sequence and $b=0$ is the minimum bound.
Otherwise, define finite sequences $\sigma$ and $\tau$ as follows.  Let $\sigma(0)$ be the
last appearing among colors that appear exactly once, and let $\tau(0)$ be the location where $\sigma(0)$ appears.
Let $\sigma(i+1)$ be the last appearing among colors that appear exactly once after $\tau(i)$ if such a color exists,
and let $\tau(i+1)$ be the last location where $\sigma(i)$ appears.  If no such color exists, terminate the sequences.
Routine verifications show that $\sigma \in T_0$ and that $\tau$ is the first witness for $\sigma$, so that
$b=\tau(\lh (\sigma )-1)+1$ is a bound for $\ert$.  From the construction, if $b^\prime$ is any bound for $\ert$,
then $b^\prime > \tau (0)$, and for $i<\lh (\sigma )$, if $b^\prime >\tau(i)$ then $b^\prime > \tau (i+1)$.
Thus $b$ is minimal.  Consequently, the minimum bound can be calculated by listing $T_0$, calculating
the value $b$ for each sequence in $T_0$, and then selecting the minimum bound.

The existence of the set $T$ in the proof that (\ref{D3-1}) implies (\ref{D3-2}) above used an application of
bounded $\szo$ comprehension followed by an application of bounded $\pzo$ comprehension.
Na\"\i{}vely concatenating the associated formulas to construct $T$ with a single application results
in a use of bounded $\szt$ comprehension, a principle equivalent to $\sfi\szt$ \cite{simpson}*{Exercise~II.3.13}.  Conversely, it may be
possible to eliminate unnecessary uses of $\sfi\szt$ in proofs, particularly in the guise of
bounded $\szt$ or bounded $\pzt$ comprehension, by using a sequence of applications of bounded $\szo$ or bounded $\pzo$
comprehension.  In the case of the preceding proof, the sequential applications can be
combined into a single application, as in the second part of the proof of Theorem~\ref{W1} below.

We complete this section with a proof of the equivalence of $\sfi\szt$ and $\ect$, showing
that $\ert$ is strictly weaker than $\ect$ over $\rcas$.  This result differs
from those in the article of Hirst \cite{hirst} in the use of the weaker base system $\rcas$.
The arguments here sidestep the tree colorings
used for \cite{hirst}*{Theorem~6} and in the alternative
argument following \cite{hirst}*{Theorem~7}, which is based
on the conservation result of Corduan, Groszek, and Mileti \cite{cgm}.

\begin{thm}\label{D4}
$(\rcas )$  The following are equivalent.
\begin{enumerate}
\item\label{D4-1} $\sfi\szt$.
\item\label{D4-2} $\ect$.
\end{enumerate}
\end{thm}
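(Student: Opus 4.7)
My plan mirrors the structure of Theorem~\ref{D3}, replacing $\sfi\szo$-level tools by $\sfi\szt$-level counterparts. The ingredients I will draw on are: over $\rcas$, $\sfi\szt$ is equivalent to bounded $\szt$ comprehension (the $\rca$ argument of \cite{simpson}*{Exercise~II.3.13} applies, since $\sfi\szt$ already implies $\sfi\szo$), and $\sfi\szt$ implies bounded $\szt$ collection $B\szt$ (a standard Paris--Kirby-style consequence).

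For $(\ref{D4-1})\Rightarrow(\ref{D4-2})$, I work in $\rcas+\sfi\szt$ and fix $f:\nat\to k$. Using bounded $\szt$ comprehension, I form $B=\{i<k:\exists n\,\forall m>n\,(f(m)\neq i)\}$, the set of colors with a last occurrence. With $B$ as a parameter, $\theta(i,n)\equiv (i\notin B)\vee\forall m>n\,(f(m)\neq i)$ is $\pzo$ and trivially satisfies $\forall i<k\,\exists n\,\theta(i,n)$; applying $B\szt$ produces a uniform $M$ such that in particular no $i\in B$ occurs at or after $M$. Setting $b=M$, every $x\ge b$ has $f(x)\notin B$, so $f(x)$ recurs after $x$, verifying $\ect$.

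For $(\ref{D4-2})\Rightarrow(\ref{D4-1})$, I argue the contrapositive by a direct construction that avoids the tree colorings of \cite{hirst}. Since $\ect$ implies $\ert$ and hence $\sfi\szo$ by Theorem~\ref{D3}, we may work in $\rca$. A failure of $\sfi\szt$ yields (via the comprehension equivalence above) a $\szt$ formula $\phi(i)\equiv\exists x\,\forall y\,\psi(i,x,y)$ with $\psi\in\szz$ and a bound $k$ for which $\{i<k:\phi(i)\}$ does not exist. Let $S(i,n)$ denote the largest $x\le n$ satisfying $\forall x'\le x\,\exists y\le n\,\psi(i,x',y)$, or $-1$ if there is none; $S$ exists by recursive comprehension, is nondecreasing in $n$, and $\phi(i)$ holds iff $\lim_n S(i,n)=\infty$. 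Using a bijection $\nat\to\nat\times k$, define $F:\nat\to k+1$ by letting the code of $(n,i)$ receive color $i$ when $S(i,n)>S(i,n-1)$ and $k$ otherwise; then color $i<k$ appears infinitely often in $F$ precisely when $\phi(i)$ holds. Applying $\ect$ to $F$ yields a bound $b$: any color $i$ with $\neg\phi(i)$ has only finitely many occurrences in $F$, so its last occurrence must lie below $b$ (otherwise the $\ect$-witness condition at that occurrence would be violated). Hence $\{i<k:\exists x\ge b\,(F(x)=i)\}$, which exists by bounded $\szo$ comprehension, coincides with $\{i<k:\phi(i)\}$, contradicting the supposed nonexistence.

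The main obstacle I anticipate is the complexity accounting in the forward direction: the statement of $\ect$ is naively $\szr$, so a direct $\sfi\szt$ induction does not apply, which is why the argument must route through bounded $\szt$ comprehension and $B\szt$ collection. A secondary issue is verifying that these $\rca$-level consequences of $\sfi\szt$ transfer to $\rcas$, but this is automatic since $\sfi\szt$ upgrades $\rcas$ to $\rca$.
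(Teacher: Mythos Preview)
Your overall strategy matches the paper's almost exactly. In the forward direction you use bounded $\szt$ comprehension to isolate the finitely-occurring colors and then $\pzo$ collection to find a uniform cutoff; the paper does the same, working with the complementary set $T=\{j<k:\forall n\,\exists x>n\,f(x)=j\}$ via bounded $\pzt$ comprehension and invoking ``$\pzo$ bounding''. In the reverse direction, both you and the paper bootstrap to $\rca$ via Theorem~\ref{D3}, build a $(k{+}1)$-coloring in which color $i<k$ appears infinitely often iff a given arithmetical condition on $i$ holds, and recover the bounded comprehension set from the $\ect$ bound by bounded $\szo$ comprehension.

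There is, however, a genuine slip in your construction of $S$. With $\phi(i)\equiv\exists x\,\forall y\,\psi(i,x,y)$ as you wrote it, the function
\[
S(i,n)=\max\bigl\{\,x\le n:\forall x'\le x\,\exists y\le n\,\psi(i,x',y)\,\bigr\}
\]
satisfies $\lim_n S(i,n)=\infty$ iff $\forall x\,\exists y\,\psi(i,x,y)$, which is neither $\phi(i)$ nor $\neg\phi(i)$ in general. (Take $\psi(i,x,y)\equiv(x\neq y)$: then $\phi(i)$ fails for every $i$, yet $S(i,n)\to\infty$.) Hence your claim ``$\phi(i)$ holds iff $\lim_n S(i,n)=\infty$'' is false as stated, and the coloring $F$ you build does not witness what you need. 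The fix is cosmetic: either start from a $\pzt$ formula $\phi(i)\equiv\forall x\,\exists y\,\psi(i,x,y)$---this is exactly what the paper does, proving bounded $\pzt$ comprehension, which is equivalent to bounded $\szt$ comprehension by complementation---or keep your $\szt$ formula but replace $\psi$ by $\neg\psi$ in the definition of $S$, so that $S(i,n)\to\infty$ iff $\neg\phi(i)$; then the colors surviving past $b$ yield $\{i<k:\neg\phi(i)\}$, and complementation gives the desired set. Once corrected, your argument and the paper's coincide.
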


\begin{proof}
To prove that (\ref{D4-1}) implies (\ref{D4-2}), assume $\sfi\szt$ and fix $f: \nat \to k$.
Because $\sfi\szt$ implies $\sfi\szo$, we may work in $\rca$.  By bounded $\pzt$ comprehension,
a consequence of $\sfi\szt$ (\cite{simpson}*{Exercise~II.3.13}, plus complementation via
recursive comprehension), the set
\[
T = \{ j<k \mid \forall n \exists x (x>n \land f(x)= j )\}
\]
exists.  If $j\notin T$, then after some point $j$ ceases to appear in the range of $f$.  Formally,
\[
\forall j<k \, \exists s \forall x ((j \notin T \land x>s)\to f(x) \neq j ).
\]
By the $\pzo$ bounding principle, a consequence of $\sfi\szt$ \cite{simpson}*{Exercise~II.3.15},
there is a $b$ such that
\[
\forall j<k \, \forall x ((j \notin T \land x>b)\to f(x) \neq j ).
\]
In particular, if $j \notin T$ then for all $x \ge b$ we have $f(x) \neq j$.  Summarizing, the range
of $f$ at or after $b$ is exactly $T$, and every value of $T$ appears infinitely often in the the range.
Thus $b$ satisfies the requirements of $\ect$.

We will prove that
(\ref{D4-2}) implies (\ref{D4-1}), by a two stage bootstrapping
argument.  For the first step, working in $\rcas$, note that $\ect$ implies $\ert$.  By Theorem~\ref{D3},
we may deduce $\sfi\szo$, so from now on we can work in $\rca$.

For the second step, we will use $\ect$ to prove bounded $\pzt$ comprehension, and then
deduce $\sfi\szt$.  Fix $k$ and consider
$T = \{ j< k \mid \forall x \exists y \theta (j,x,y) \}$ where $\theta$ is a $\szz$ formula.
Our goal is to prove the existence of $T$.  Suppose $( j,x,y)$ is the $n^{\text{th}}$ triple in a bijective
enumeration of $k \times \nat \times \nat$.  Define $f(n) = j$ if $y$ is the first witness that
$\forall s<x \, \exists t\le y \, \theta (j,s,t)$, and let $f(n) = k$ otherwise.  The function $f$ exists by
recursive comprehension.  For any $j<k$, $j$ appears infinitely often in the range of $f$ if and
only if $\forall x \exists y \theta (j,x,y)$.  Apply $\ect$ to $f$ and obtain a bound $b$.  Then
\[
T = \{ j<k \mid \exists x (x \ge b \land f(x) = j )\}.
\]
By bounded $\szo$ comprehension, a consequence of $\rca$ \cite{simpson}*{Theorem~II.3.9}, the set $T$ exists, proving
bounded $\pzt$ comprehension.  To complete the proof, recall that by the first step above, we may work in $\rca$.
By complementation, bounded $\pzt$ comprehension implies bounded $\szt$ comprehension.
Applying Exercise~II.3.13 of Simpson \cite{simpson}, $\szt$ induction follows from $\rca$ and bounded
$\szt$ comprehension.
\end{proof}

\subsubsection*{3.  Weihrauch analysis}

The goal of this section is to analyze $\ert$ and $\ect$ using Weihrauch reductions.
Because $\ert$ and $\ect$ have number outputs rather than set outputs, Weihrauch reducibility 
yields meaningful results where other forms of computability-theoretic reducibility would not.  
We will consider
Weihrauch problems defined by subsets of $\nat^\nat \times \nat$.  Each problem $P$ can be
viewed as a multifunction mapping instances $I\in {\text{domain}}(P)$ into solutions $S$ with
$(I,S)\in P$.
A problem $P$ is Weihrauch reducible to a problem $Q$, written $P\lew Q$, if instances
of $P$ can be uniformly computably transformed into instances of $Q$ whose solutions can be
uniformly computably transformed into solutions of the problem $P$.  This last transformation
may make use of the original instance of $P$.
More formally, $P \lew Q$ if there are computable functionals $\Phi$ and $\Psi$ such that
for all $I \in  {\text{domain}}(P)$, $\Phi (I) \in {\text{domain}} (Q)$, and for all $S$ such that
$(\Phi(I), S) \in Q$, 
we have $(I, \Psi(I,S))\in P$.  We write $P \eqw Q$ if $P \lew Q$ and $Q \lew P$,
and write $P \ltw Q$ if $P\lew Q$ and $Q \not \lew P$.

The relation $\eqw$ is an equivalence relation on the Weihrauch problems.  The equivalence
classes are called Weihrauch degrees, and many have well-known representing problems.
For example, many Weihrauch problems are known to be equivalent to the Weihrauch problem
$\lpo$ ({\emph L}imited {\emph P}rinciple of {\emph O}mniscience).  This problem
takes as an instance any $f \in \nat^\nat$, and outputs $0$ if $\exists n f(n)=0$ and $1$ otherwise.
For an introduction to Weihrauch reducibility and many Weihrauch degrees, see the article of
Brattka and Gherardi \cite{bg} and the survey
of Brattka, Gherardi, and Pauly \cite{bgp}. 

Many operators on Weihrauch problems preserve reducibility.  For example,
for a problem $P$, the problem $P^{n}$ is the result of $n$ parallel applications of $P$. 
The problem $P^*$ is the result of an arbitrary finite number of parallel applications of $P$.
Thus, for each $n$, we have $P^{n}\lew P^*$. 
Pauly introduces the concept of $P^*$ in  
\cite{pauly} and in Theorem~6.5 shows that $P \lew Q$ implies $P^* \lew Q^*$.  Thus
$\cdot^*$ can be viewed as an operator that preserves Weihrauch reducibility.

We may view $\ert$ as a Weihrauch problem, where the input is a number $k$ and a function $f: \nat \to k$,
and the solution is a value $b$ as provided by $\ert$, that is,
\[
\forall n \ge b \, \exists m \ge b \, (m\neq n \land f(m) = f(n)).
\]
In a similar fashion, $\ect$ can be viewed as a Weihrauch problem.  
Our goal is to find a known Weihrauch problems equivalent to $\ert$ and to $\ect$, and
to separate $\ert$ and $\ect$ in the Weihrauch setting.  As a first step, we can state the
following theorem.

\begin{thm}\label{W1}
$\ert \eqw \lpo^*$.
\end{thm}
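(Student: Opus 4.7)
My plan is to prove the two Weihrauch reductions $\ert \lew \lpo^*$ and $\lpo^* \lew \ert$ separately. For the easier direction $\lpo^* \lew \ert$, given an instance $(n, g_1, \dots, g_n)$ of $\lpo^*$, I encode it as a single $\ert$ instance $(n+1, f)$: using the bijection $p \mapsto (q, i)$ defined by $p = nq + (i-1)$ with $i \in \{1, \dots, n\}$, set $f(p) = i$ when $q$ is the least index satisfying $g_i(q) = 0$, and $f(p) = 0$ otherwise. Then color $0$ appears infinitely often, while each color $i \in \{1, \dots, n\}$ appears at most once, namely precisely when $g_i$ has a zero. Any $b$ produced by $\ert$ for this $f$ must force every singleton color out of $[b, \infty)$, so must exceed $nq_i + (i-1)$ whenever $g_i$ has a first zero $q_i$. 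From $b$ and the original $g_i$, I recover $\lpo(g_i)$ by bounded search: output $0$ if $g_i(q) = 0$ for some $q < b$, and $1$ otherwise.

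For $\ert \lew \lpo^*$, I adapt the proof of (\ref{D3-1})$\Rightarrow$(\ref{D3-2}) in Theorem~\ref{D3}, which built a finite set $T$ via a sequential application of bounded $\szo$ and bounded $\pzo$ comprehension. Given $(k, f)$, the collection of non-repeating sequences $\sigma$ of length less than $k$ with entries below $k$ is finite, of a size computable from $k$. I submit a single $\lpo^*$ batch containing, for each such $\sigma$, two families of $\szo$ queries: Q1$(\sigma)$ asking whether $\sigma \in S$, i.e., whether there exists an increasing $\tau$ of length $\lh(\sigma)$ with $f(\tau(i)) = \sigma(i)$ for all $i$; and, for each $c < k$, Q2$(\sigma, c)$ asking whether there exist an increasing $\tau$ of length $\lh(\sigma)$ and $N > \tau(\lh(\sigma) - 1)$ with $f(\tau(i)) = \sigma(i)$ for all $i$ and $f(N) = c$. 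From the answers I compute $S$, then $T = \{\emptyset\} \cup \{\sigma \in S : \neg\mathrm{Q2}(\sigma, \sigma(\lh(\sigma)-1))\}$, then $T_0$; I select any $\sigma_0 \in T_0$ (non-empty, since $T$ is finite and nonempty), recover its greedy first witness $\tau_0$ by direct search in $f$, and output $b = \tau_0(\lh(\sigma_0)-1)+1$ (or $b = 0$ if $\sigma_0$ is empty), exactly as in the proof of Theorem~\ref{D3}.

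The main obstacle---and the point the authors flag in their remarks after Theorem~\ref{D3}---is that $\lpo^*$ is non-adaptive: the whole batch must be prepared before any answer is seen. A direct simulation of the Theorem~\ref{D3} proof would require two adaptive rounds, first to identify $S$ and then, using the first witnesses extracted from $S$, to ask whether the last color of each $\sigma$ reappears past its witness. The collapsing trick is that Q2$(\sigma, c)$, phrased purely existentially in $(\tau, N)$, still correctly captures ``the last color reappears past the greedy first witness'' when $\sigma \in S$: a short induction shows that the greedy witness $\tau_0$ minimizes the final coordinate among all witnesses of $\sigma$, so any witness $\tau$ satisfying Q2 has $\tau(\lh(\sigma)-1) \ge \tau_0(\lh(\sigma)-1)$, whence $N > \tau(\lh(\sigma)-1)$ also certifies reappearance beyond $\tau_0$'s last coordinate; conversely, $\tau_0$ itself always serves as such a $\tau$. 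This replaces the two sequential bounded-comprehension uses with a single round of parallel $\lpo$ queries, yielding $\ert \lew \lpo^*$ and hence the equivalence.
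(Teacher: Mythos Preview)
Your proof is correct and follows essentially the same approach as the paper's: the $\lpo^* \lew \ert$ direction is the same encoding (up to indexing), and for $\ert \lew \lpo^*$ you carry out exactly the collapsing of the two sequential bounded-comprehension steps from Theorem~\ref{D3} into a single parallel batch of $\Sigma^0_1$ queries, which is precisely what the paper does when it defines $T'$ as the set of sequences that ``appear and whose last color reappears.'' Your explicit verification that the greedy first witness minimizes the final coordinate (so that the existential Q2 query correctly captures reappearance past $\tau_0$) fills in a detail the paper leaves implicit; the extra queries Q2$(\sigma,c)$ for $c \neq \sigma(\lh(\sigma)-1)$ are harmless but unused.
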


\begin{proof}
First we show that $\lpo^*\lew \ert$.  Given $k$ $\lpo$ instances
$f_0 , \dots f_{k-1}$ we define a coloring $g: \nat \to k+1$ as follows.
For $i<k$, let $g(nk+i) = i$  if and only if $f_i(n)=0$ and $\forall m<n (f_i (m) \neq 0)$.
Else, set $g(nk+i) = k$.  Note that by construction, all colors but $k$
appear at most once in the range of $g$.  Thus any solution to $\ert$ for $g$ must be
an upper bound for the first occurrence of $0$ in the range of any $f_i$, which
allows us to solve $\lpo$ for each $f_i$.

For the converse reduction, we can adapt the first part of the proof of Theorem~\ref{D3},
substituting $\lpo^*$ for the uses of bounded comprehension.  Given $f: \nat \to k$ we can
use finitely many parallel applications of $\lpo$ to find the non-repeating sequences of
colors in the set $S$.  Simultaneously, we can use finitely many parallel applications of
$\lpo$ to find those sequences
that appear and whose last color reappears.
Call the set of these sequences $T^\prime$.  A
sequence is in the set $T$ defined in the proof of Theorem~\ref{D3} if and only if
it is in $S$ and is not in $T^\prime$.  Given the set $T$, we can find the bound $b$
satisfying $\ert$ for $f$ by the construction in the proof of Theorem~\ref{D3}.
This shows that $\ert \lew \lpo^*$.  Summarizing, $\ert \eqw \lpo^*$.
\end{proof}

Next, we turn to the Weihrauch analysis of $\ect$.  The principle
Discrete Choice, denoted $\wcn$, takes as an input an enumeration
of the complement of a nonempty set $A$ and outputs an element of $A$.
The article of Neumann and Pauly \cite{np}
introduces and studies $\wtcn$, the
total continuation of $\wcn$.
$\wtcn$ accepts the enumeration
of the complement of any set $A$, empty or not, and outputs a number,
which will be an element of $A$ if $A$ is nonempty.
Clearly, $\wcn \lew \wtcn$, and consequently $\wcn \lew \wtcn^*$.
Lemma~5 of Neumann and Pauly \cite{np} includes $\lpo^* \ltw \wcn$.
Concatenating the relations, $\lpo^* \ltw \wtcn^*$.
The next theorem links $\wtcn^*$ and $\ect$.

\begin{thm}\label{W2}
$\ect \eqw \wtcn^*$.
\end{thm}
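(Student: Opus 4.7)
The plan is to establish the two directions $\ect \lew \wtcn^*$ and $\wtcn^* \lew \ect$ separately.

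For $\ect \lew \wtcn^*$, given an instance $(k,f)$ I would launch $k$ parallel copies of $\wtcn$, one per color. For each $j<k$, let $A_j = \{n : \forall m > n\; f(m) \ne j\}$; its complement is $\szo$-enumerable uniformly in $f$. Feeding this enumeration to $\wtcn$ produces a number $n_j$, which lies in $A_j$ whenever $j$ appears only finitely often (so $n_j$ is past the last occurrence of $j$) and is arbitrary otherwise. Then $b := 1 + \max_{j<k} n_j$ is an $\ect$-bound for $f$, since no finitely-appearing color can occur at any position $\ge b$.

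For $\wtcn^* \lew \ect$, given enumerations $e_0,\ldots,e_{k-1}$ of the complements of $A_0,\ldots,A_{k-1}$, the idea is to reduce each $\wtcn$ instance to detecting when a canonical approximation to $\min A_i$ has stabilized. Let $a_i(t) := \min(\nat \setminus \{e_i(s) : s \le t\})$; this is nondecreasing in $t$, stabilizes at $\min A_i$ when $A_i$ is nonempty, and tends to infinity when $A_i$ is empty. Fix a computable pairing $\langle \cdot, \cdot \rangle : k \times \nat \to \nat$ with $\langle i, t\rangle \ge t$, and define $f : \nat \to k+1$ by $f(\langle i, t\rangle) = i$ if $a_i(t+1) \ne a_i(t)$ and $f(\langle i, t\rangle) = k$ otherwise. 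For each $i<k$, color $i$ then appears infinitely often in $f$ exactly when $A_i$ is empty, while color $k$ appears infinitely often regardless. Feeding $f$ to $\ect$ yields a bound $b$; for each $i$ I would output $a_i(b)$. If $A_i \ne \emptyset$, the last change-stage $t_i^*$ of $a_i$ satisfies $\langle i, t_i^*\rangle < b$, so $t_i^* < b$ and hence $a_i(b) = \min A_i \in A_i$; if $A_i = \emptyset$, then $a_i(b)$ is some natural number, which is an acceptable arbitrary output.

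The main obstacle I expect is the reverse direction. The tension is that the $\wtcn$ answers are a priori unbounded natural numbers, while an $\ect$-instance uses only finitely many colors, and a single $\ect$-bound must simultaneously resolve all $k$ instances. The observation that makes the encoding above work is that $\ect$ delivers not merely the identity of the finitely-appearing colors but a uniform bound past their last occurrences, and this stage-level information is precisely what is needed to read $\min A_i$ off from the stabilized approximation $a_i(\cdot)$.
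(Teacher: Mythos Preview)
Your proof is correct and follows essentially the same approach as the paper. For $\ect \lew \wtcn^*$ the arguments are virtually identical; for $\wtcn^* \lew \ect$ both you and the paper track the least number not yet enumerated into the complement and use an extra ``default'' color for non-change stages, with the paper outputting $\min\{n : \forall t<s\,(e_i(t)\neq n)\}$ for a sufficiently large $s$ past the $\ect$-bound, which is exactly your $a_i(b)$.
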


\begin{proof}
To see that $\ect \lew \wtcn^*$, suppose the coloring $f: \nat \to k$ is an instance of $\ect$.
Our goal is to use finitely many applications of $\wtcn$ to find a value $b$ such that every color
appearing at or after $b$ appears infinitely often in the range of $f$.  For each $i<k$ construct
an enumeration of the complement of the set
\[
A_i = \{ n \mid \forall m\ge n (f(m) \neq i ) \}.
\]
Apply $\wtcn$ to each of these sets to obtain numbers $b_i$ such that if the color $i$
appears only finitely often, then it no longer appears after $b_i$.  The number
$b =1+ \max \{ b_i \mid i<k\}$ is a solution to the $\ect$ instance.

For the converse direction, suppose $A_i$ for $1\le i <k$ is a finite list of $\wtcn$ instances,
where for each $i$, $e_i$ enumerates the complement of $A_i$.  Fix a bijective pairing function
$(\cdot,\cdot): \nat\times k \to \nat$, and define the coloring $c: \nat \to k$ by
\[
c((s,i)) =
\begin{cases}
i&{\text{if}~}i\neq 0 \land e_i(s) = \min\{ n \mid \forall t<s ( e_i(t) \neq n)\}\\
0&{\text{otherwise.}}
\end{cases}
\]
Apply $\ect$ to $c$ to find a bound $b$.  If some color $i \neq 0$ appears infinitely often in the range of $c$,
then $A_i = \emptyset$.  Otherwise, if $i$ never appears after $b$ and $s$ is sufficiently
large that $(s,i)\ge b$, then $\min\{ n \mid \forall t<s ( e_i(t) \neq n)\}$ is in $A_i$.  In either case,
$\min\{ n \mid \forall t<s ( e_i(t) \neq n)\}$ is a valid output for $\wtcn$ applied to the input $A_i$.
\end{proof}

Summarizing, we have shown that $\ert \eqw \lpo^*$, $\lpo^* \ltw \wtcn^*$, and $\wtcn^* \eqw \ect$,
so $\ert \ltw \ect$.  We have captured the strength of $\ert$ and $\ect$ in terms of known
Weihrauch degrees, and shown that $\ert$ is strictly weaker than $\ect$ in the Weihrauch degrees.

Both Theorem~\ref{W1} and Theorem~\ref{W2} fail for strong Weihrauch reducibility.
In strong reducibility, the solution to the input problem must be computed from any solution of the
transformed problem without further reference to the original input.  Using the notation from the
first paragraph of this section,
$P \lesw Q$ if there are computable functionals $\Phi$ and $\Psi$ such that
for all $I \in  {\text{domain}}(P)$, $\Phi (I) \in {\text{domain}} (Q)$, and for all $S$ such that
$(\Phi(I), S) \in Q$, 
we have $(I, \Psi(S))\in P$.  

As an example using familiar problems, we will show that $\lpo^* \ltsw \wtcn^*$.
To see that $\lpo \lesw \wtcn$, given an instance $f$ of $\lpo$, construct an
instance $g$ of $\wtcn$ by setting $g(n) = n+1$ if $f(n) \neq 0$ and $g(n) = 0$ otherwise.
If the solution for $g$ is positive, then the solution for $f$ is $0$.  If the solution for $g$ is $0$,
then the solution for $f$ is $1$.  Similarly, sequences of $\lpo$ problems can be transformed to
sequences of $\wtcn$ problems, so $\lpo^* \lesw \wtcn^*$.  We know $\wtcn^* \not\lew \lpo^*$, so
$\wtcn^* \not\lesw \lpo^*$, and thus $\lpo^* \ltsw \wtcn^*$.  The next theorem summarizes strong
reducibility relations for $\ert$ and $\ect$.

\begin{thm}\label{sW}
$\ert \ltsw \ect\ltsw \wtcn^*$, $\lpo \not\lesw \ect$, and $\ert \not\lesw \lpo^*$.
\end{thm}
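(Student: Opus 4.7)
The plan is to decompose the theorem into four assertions: the two positive reductions $\ert\lesw\ect$ and $\ect\lesw\wtcn^*$, together with the two negative results $\lpo\not\lesw\ect$ and $\ert\not\lesw\lpo^*$. The strict inequalities then follow from these pieces: $\ect\not\lesw\ert$ is immediate from $\ect\not\lew\ert$, which is part of the chain $\ert\eqw\lpo^*\ltw\wtcn^*\eqw\ect$ summarized after Theorems~\ref{W1} and~\ref{W2}, and $\wtcn^*\not\lesw\ect$ follows from $\lpo\not\lesw\ect$ together with $\lpo\lesw\wtcn\lesw\wtcn^*$, already noted in the discussion above.

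For the positive reductions little work is required. The identity functionals witness $\ert\lesw\ect$: any $\ect$-bound $b$ for $f$ is also an $\ert$-bound, since for every $x\ge b$ a witness $y>x$ with $f(x)=f(y)$ automatically satisfies $y\ne x$ and $y\ge b$. For $\ect\lesw\wtcn^*$, I would simply point out that the forward construction in the proof of Theorem~\ref{W2} computes the output bound $b=1+\max\{b_i:i<k\}$ using only the $\wtcn$ outputs, never the original coloring, so the argument given there is already strong.

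For $\lpo\not\lesw\ect$ the key is the upward closure of the set of valid $\ect$-bounds: if $b$ satisfies $\ect$ for $f$ then so does every $b'\ge b$. Assume $(\Phi,\Psi)$ witnesses a strong reduction and pick $f_0\equiv 1$ and $f_1$ with $f_1(0)=0$, so that $\lpo(f_0)=1$ and $\lpo(f_1)=0$. Each $\Phi(f_i)$ has some minimum $\ect$-bound $B_i$, and for every $b\ge B_i$ strong reducibility forces $\Psi(b)=\lpo(f_i)$. Taking any $b\ge\max(B_0,B_1)$ then requires $\Psi(b)$ to equal both $0$ and $1$, yielding the desired contradiction.

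The main obstacle is $\ert\not\lesw\lpo^*$, where the right tool is the discreteness of $\lpo^*$ solutions. Suppose $(\Phi,\Psi)$ witnesses a strong reduction and consider the $\ert$ instance $(2,f_0)$ with $f_0\equiv 0$. By continuity of $\Phi$, the dimension $m_0$ of the $\lpo^*$ instance $\Phi(2,f_0)$ is determined from a finite prefix $f_0\restriction N_0$. Since every solution to a dimension-$m_0$ instance of $\lpo^*$ is a bit string in $\{0,1\}^{m_0}$, the set of outputs $\Psi(s)$ over all $s\in\{0,1\}^{m_0}$ on which $\Psi$ halts is finite; let $B_0$ denote its maximum. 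Choosing any $L>\max(N_0,B_0)$, I would define $f:\nat\to 2$ by $f(L)=1$ and $f(n)=0$ for $n\ne L$. This $f$ agrees with $f_0$ on $[0,N_0)$, so by the use principle $\Phi(2,f)$ still has dimension $m_0$, and hence $\Psi$ applied to the unique solution of $\Phi(2,f)$ returns a value in $[0,B_0]$. But the minimum $\ert$-bound of this $f$ is $L+1>B_0$, contradicting the requirement that $\Psi$'s output be a valid $\ert$-bound for $f$. The delicate step is pinning down the use principle for the dimension: one must verify that the value $m_0$ is computed from only finitely much of the input, so that it is preserved under modifications of $f_0$ beyond $N_0$.
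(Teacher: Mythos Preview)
Your proof is correct and follows the same overall decomposition as the paper: identity functionals for $\ert\lesw\ect$, the forward direction of Theorem~\ref{W2} for $\ect\lesw\wtcn^*$, upward closure of $\ect$-bounds for $\lpo\not\lesw\ect$, and a finite-use argument against the constant-zero coloring for $\ert\not\lesw\lpo^*$.

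Your treatment of $\ert\not\lesw\lpo^*$ differs from the paper's in a way worth noting. The paper asserts that the entire $\lpo^*$-instance $\Phi(f_1)=(g_1,\dots,g_n)$ is determined by a finite initial segment of $f_1$, and hence that $\Phi(f_2)=\Phi(f_1)$; but $\Phi(f_1)$ is an infinite object, so continuity only pins down finite pieces of it, and in particular only the number $n$ of components is guaranteed to be preserved when passing to $f_2$. You sidestep this by observing that only the dimension $m_0$ need be stable under the prefix, and then bounding $\Psi$ over the finite set $\{0,1\}^{m_0}$ of all possible $\lpo^*$-solutions of that dimension. This is a cleaner argument: it uses the discreteness of the $\lpo^*$ solution space directly rather than attempting to freeze the instance itself. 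The paper's version can be repaired along exactly these lines, so the two arguments are morally the same, but yours is the more careful formulation.
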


\begin{proof}
Identity functionals witness $\ert \lesw \ect$.  We know $\ect \not\lew \ert$, so $\ect \not\lesw \ert$ and thus $\ert \ltsw \ect$.

The first paragraph of the proof of Theorem~\ref{W2} shows that $\ect \lesw \wtcn^*$.  The failure of the converse relation
and $\ect \ltsw \wtcn^*$ both follow from $\lpo \not \lesw \ect$, which we prove next.

To see that $\lpo \not\lesw \ect$, suppose by contradiction that $\Phi$ and $\Psi$ witness $\lpo \lesw \ect$.  Suppose
$f_1$ and $f_2$ are $\lpo$ problems with distinct solutions.  Let $\Phi(f_1 ) = g_1$ and $\Phi (f_2) = g_2$ be the associated
$\ect$ problems.  Let $b_1$ be a solution for $g_1$ and $b_2$ be a solution for $g_2$.  Then $b=\max \{b_1, b_2 \}$ is
a solution for both $g_1$ and $g_2$.  Then $\Psi (b)$ is a solution for both $f_1$ and $f_2$, yielding a contradiction.
Thus $\lpo \not \lesw \ect$.
This argument is an example of the general principle that no multifunction of the form $f:X \rightrightarrows \nat$ where all $f(x)$ are upwards closed
can compute a non-trivial multifunction $g: X  \rightrightarrows k$ for finite $k$.

To see that $\ert \not\lesw \lpo^*$, we again argue by contradiction, supposing that $\Phi$ and $\Psi$
witness $\ert \lesw \lpo^*$.
Let $f_1$ be the instance of $\ect$ consisting of a two-coloring that is constantly zero.
Suppose $\Phi (f_1) = (g_1 , \dots , g_n)$ is a sequence of $n$ instances of $\lpo$.
The computation of $\Phi (f_1)$ uses only a finite initial segment of $f_1$.  Denote the
length of this segment by $k$.  The $\lpo$ problems $g_1 , \dots , g_n$ have solutions
$s_1 , \dots , s_n$.  Thus $\Psi (s_1 , \dots , s_n )$ computes a bound $m$
satisfying $\ert$ for $f_1$.  Now consider the $\ert$ problem $f_2$, consisting of a two-coloring
that contains $k+m$ zeros, followed by a single one, followed by an infinite string of zeros.
Because $f_2$ and $f_1$ agree on the first $k$ values, $\Phi (f_2)= \Phi(f_1)= (g_1 , \dots , g_n)$.
These $\lpo$ problems are the same as before, and so still have the solutions $s_1 , \dots , s_n$.
Thus $\Psi (s_1 , \dots , s_n) = m$ should be a bound satisfying $\ert$ for $f_2$.  However,
by the construction of $f_2$, any bound for $f_2$ must be at least $k+m+1$, which is strictly
larger than $m$.  Thus $\Phi$ and $\Psi$ cannot be witnesses of $\ert \lesw \lpo^*$, and we
have shown that $\ert \not \lesw \lpo^*$.
\end{proof}

Minor alterations in the formulations of $\ert$ and $\ect$ can result in interesting variations in their
Weihrauch strengths.  For example, let $\minert$ denote the principle that outputs the minimum bound
satisfying $\ert$.  Define $\minect$ similarly.

\begin{thm}\label{minert}
$\ert \eqw \minert$ 
and $\rcas$ proves $\ert \leftrightarrow \minert$.
\end{thm}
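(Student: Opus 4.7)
Both $\ert \lew \minert$ and the implication $\minert \to \ert$ over $\rcas$ are witnessed by the identity, since any minimum $\ert$-bound is in particular an $\ert$-bound.

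For the converses, I would rely on the paragraph immediately following the proof of Theorem~\ref{D3}, which already gives an explicit algorithm that, given $f$ and the sets $T$ and $T_0$ constructed there, returns the minimum $\ert$-bound for $f$. Once $T_0$ is available, the extraction is a purely finite operation: for each $\sigma \in T_0$ compute the first witness sequence $\tau$ and the candidate $b = \tau(\lh(\sigma) - 1) + 1$, then output the minimum.

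To deduce $\minert \lew \ert$, Theorem~\ref{W1} reduces the task to $\minert \lew \lpo^*$. Given $f: \nat \to k$ I would use finitely many parallel applications of $\lpo$ to compute $S$, $T^\prime$, $T$, and $T_0$ exactly as in the second half of the proof of Theorem~\ref{W1}, and then apply the finite algorithm above to extract the minimum bound.

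For the $\rcas$ implication $\ert \to \minert$, Theorem~\ref{D3} promotes $\rcas + \ert$ to $\rca$, and Lemma~\ref{D1} then supplies the bounded $\szo$ and bounded $\pzo$ comprehension needed to define $T$ and $T_0$; the minimum bound is $\szz$-definable from these, so $\minert$ follows. The paragraph after Theorem~\ref{D3} has absorbed all the genuine combinatorial content, so I do not expect any real obstacle --- the present proof is just a matter of invoking that construction in each of the two settings and verifying that the extraction procedure is uniform in $f$.
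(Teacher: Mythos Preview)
Your Weihrauch argument is essentially the paper's: both pass through $\lpo^*$, build $T$ (and $T_0$) as in Theorem~\ref{W1}, and invoke the paragraph after Theorem~\ref{D3} to extract the minimum. For the reverse-mathematics direction $\ert \to \minert$, however, you take a different route from the paper. You reuse the $S$/$T$/$T_0$ machinery inside $\rca$ and read off the minimum bound from $T_0$; the paper instead applies $\ert$ once to get some bound $b$, uses bounded $\szo$ comprehension only to form the tail range $Y = \{c<k \mid \exists x\ge b\, f(x)=c\}$, and then applies the $\szz$ least element principle to find the least $n\le b$ such that every $t\in[n,b]$ either has $f(t)\in Y$ or has its color repeated in $[n,b]$. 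The paper's argument is shorter and avoids bounded $\pzo$ comprehension and the witness-sequence bookkeeping; your argument has the virtue of reusing exactly the same construction in both the Weihrauch and the $\rcas$ settings. One small imprecision: the minimum bound is not literally $\szz$-definable from $T_0$ alone, since recovering the first witness sequence $\tau$ for each $\sigma\in T_0$ still requires an unbounded (but terminating) search through $f$; in $\rca$ this is handled by primitive recursion or a further application of bounded $\szo$ comprehension over the finitely many $\sigma$'s, so the argument goes through, but it is worth saying explicitly.
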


\begin{proof}
Every solution of $\minert$ is a solution of $\ert$, so $\ert \lew \minert$.
For the converse, apply the second paragraph of the proof of Theorem~\ref{W1}, using $\lpo^*$ to find the set $T$.
By the note following the proof of Theorem~\ref{D3}, $T$ can be used to calculate the minimum bound.
Thus $\minert \lew \lpo^*$.  By Theorem~\ref{W1},
$\lpo^* \lew \ert$, so $\minert \lew \ert$.

For the reverse mathematics result, $\rcas$ proves $\minert$ implies $\ert$ trivially.
To prove the converse, assume $\ert$ and let $f:\nat \to k$.
By $\ert$, we can find a bound $b$.  By Theorem~\ref{D3}, $\ert$ implies $\Sigma^0_1$ induction,
so by Lemma~\ref{D1} we can use bounded $\Sigma^0_1$ comprehension to find
$Y = \{ c<k \mid \exists x ( x \ge b \land f(x) = c\}$, the range of $f$ on $[b,\infty)$.  By the $\Sigma^0_0$
least element principle, there is a least $n\le b$ such that for all $t\in [n,b]$, either
$f(t)\in Y$ or $f(t)$ appears at least twice in $[n,b]$.  This least $n$ satisfies $\minert$.
\end{proof}

In contrast to Theorem~\ref{minert}, we will prove below that $\ect \ltw \minect$.
Our proof uses the following characterization of $\minect$ in terms of $\wtcn$ and $\isinf$.
The principle $\isinf$ takes an infinite binary string as input, outputs $1$ if it has infinitely many
ones, and outputs $0$ otherwise.  The notation $P\times Q$ denotes the principle corresponding
to solving $P$ and $Q$ in parallel.

\begin{thm}\label{minect}
$\minect \eqw \wtcn^* \times \isinf^*$.
\end{thm}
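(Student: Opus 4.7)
I plan to prove both Weihrauch reductions. For $\minect \lew \wtcn^* \times \isinf^*$, given an instance $f:\nat\to k$ of $\minect$, I would apply $k$ parallel copies of $\isinf$ to the indicators $e_i(n)=\chi_{\{f(n)=i\}}$ to determine the set $I\subseteq[0,k)$ of colors of $f$ appearing infinitely often, together with $k$ parallel copies of $\wtcn$ on enumerations of the complements of $A_i=\{n:\forall m\ge n,\ f(m)\ne i\}$, obtaining numbers $b_i$. For each $i\notin I$ that occurs in $f$, the number $b_i$ strictly exceeds the last occurrence $L_i$ of $i$, and $L_i$ can then be found by a bounded search in $f\restriction[0,b_i)$; the $\minect$-answer is $1+\max\{L_i:i\notin I \text{ and } i \text{ occurs in }f\}$, or $0$ if no such $i$ exists.

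For $\wtcn^* \times \isinf^* \lew \minect$, given $\wtcn$-instances $(A_i)_{i\le k}$ (via enumerations $e_i$ of their complements) and $\isinf$-instances $(h_j)_{j\le m}$, I construct a single coloring $g:\nat\to N$ split into disjoint tracks: one padding track colored $0$; for each $i\le k$ a $\wtcn$-track colored with a fresh $c_i$ exactly as in the proof of Theorem~\ref{W2}, so $c_i$ is infinite in $g$ iff $A_i=\emptyset$, and otherwise the last occurrence of $c_i$ witnesses an element of $A_i$; and for each $j\le m$ an $\isinf$-track with two fresh colors $a_j,b_j$ mimicking the single-case reduction $\isinf\lew\minect$ via $g(0)=1$, $g(n)=h(n-1)$, so $a_j$ is infinite in $g$ iff $\isinf(h_j)=1$ and $b_j$ is infinite iff $h_j$ has infinitely many $0$s (in particular at least one of $a_j,b_j$ is always infinite). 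The $\wtcn$-outputs are extracted from $b^*=\minect(g)$ stage-by-stage exactly as in Theorem~\ref{W2}, and each $\isinf$-output should come from a track-local version of the single-case algorithm ``$\isinf(h)=1$ iff $b^*=0$ or the color at $b^*-1$ is the non-seed color,'' using that $a_j,b_j$ are used only on the $j$-th track, so their finiteness in $g$ coincides with their finiteness on $g\restriction P_j$.

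The main obstacle is that the global $b^*$ pinpoints the local minimum bound directly only on the single track achieving the global maximum, so on every other track the local minimum must be reconstructed using $b^*$ together with the track layout and the original input $h_j$. I expect this to work because $b^*$ yields a uniform upper bound on each track---the first track-$j$ index whose global position is at least $b^*$---and a case analysis on the pair $(a_j,b_j)$ (at least one infinite) reads off $\isinf(h_j)$ from the color at the appropriate bounded index together with a bounded inspection of $h_j$. Verifying this case analysis uniformly for every $j$, rather than only for the track contributing the global maximum, will be the delicate step.
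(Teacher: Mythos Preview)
Your reduction $\minect \lew \wtcn^* \times \isinf^*$ is correct and matches the paper's argument essentially verbatim.

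The converse direction, however, has a genuine gap precisely at the point you flag as ``delicate.'' The interleaved-track coloring does not allow you to recover $\isinf(h_j)$ for a track $j$ that does not realize the global maximum. Concretely, take two $\isinf$ instances $h_1,h_2$ and no $\wtcn$ instances. Fix $h_1$ with $\isinf(h_1)=0$ and last $1$ at position $L_1$; this forces $b^*$ to be determined by the last occurrence of $a_1$, at some global position $G_1$. Now compare $h_2=0^\omega$ (so $\isinf(h_2)=0$) with $h_2=0^N 1\,0\,1\,0\,\cdots$ for $N$ large (so $\isinf(h_2)=1$). In both scenarios the only finitely-occurring color other than $a_1$ is either $a_2$ (with last occurrence at track-$2$ position $0$, far below $G_1$) or nothing, so $b^*=G_1+1$ in both. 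The first track-$2$ index $B_2$ with global position $\ge b^*$ is the same in both, and the color there is $b_2$ in both (since $h_2(B_2-1)=0$ whenever $N>B_2$). Your decoding functional $\Psi$ receives the same $b^*$ and inputs $(h_1,h_2)$ that agree on any prescribed initial segment, hence must return the same value for $h_2$ in both scenarios---but the correct $\isinf$ answers differ. No ``bounded inspection of $h_j$'' can rescue this: the obstruction is exactly that deciding whether $a_j$ reappears beyond $B_j$ is itself an $\isinf$-type question.

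The paper avoids this by a different decomposition: it first shows $\isinf \lew \minect$ and $\wtcn^*\lew\minect$ separately (the latter via $\ect\lew\minect$ and Theorem~\ref{W2}), and then proves that $\minect$ is \emph{idempotent}, i.e.\ $\minect\times\minect\lew\minect$, using a product coloring $r(\langle n,m\rangle)=\langle p(n),q(m)\rangle$ under a monotone pairing function (after a small modification ensuring each factor has at least one finitely-occurring color). Idempotence gives $\minect^*\lew\minect$, whence $\wtcn^*\times\isinf^*\lew\minect\times\minect\lew\minect$. The product coloring is the key idea your interleaving lacks: it makes every pair of last-occurrence positions $(n_0,n_1)$ visible as a single last-occurrence position $\langle n_0,n_1\rangle$, so the minimum bound for the product genuinely encodes both factors' minimum bounds.
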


\begin{proof}
To see that $\minect \lew \wtcn^* \times \isinf^*$, let $f : \nat \to k$ be an instance of $\minect$.
For each $j<k$, we can use one instance of $\isinf$ to determine if $j$ appears infinitely many times
in the range of $f$, and one instance of $\wtcn$ to find the last occurrence of $j$ in the case that $j$
appears only finitely many times.  Adding one to the maximum of the positions for the values that do not appear
infinitely many times yields the desired output for $\minect$.

The converse relation takes a few steps.  By Theorem~\ref{W2}, $\wtcn^* \eqw \ect$.  Trivially,
$\ect \lew \minect$, so $\wtcn^* \lew \minect$.

To see that $\isinf \lew \minect$, let $p$ denote an infinite binary sequence.  Let $r$ be the sequence consisting
of a $1$ followed by the result of alternating $0$ with digits from $p$.  Then $\minect (r)$ is $0$ if and only if
$1$ appears infinitely many times in $p$.

Next, we show that $\minect$ is idempotent, or to be more precise, that $\minect \times \minect \lew \minect$.  Let
$\langle \cdot , \cdot \rangle : \nat \times \nat \to \nat$ be a bijective map
such that if $m_0 \le m_1$ and $n_0 \le n_1$, then $\langle  m_0 , n_0 \rangle \le \langle m_1, n_1 \rangle$.
Let $p$ and $q$ be instances
of $\minect$.  Replace $p(0)$ with a color not appearing in the range of $p$.  This increases the value of
$\minect$ by one only in the case that every color appears infinitely often in the original sequence.
We can now assume that at least one color appears only finitely many times in $p$. 
Make the same adjustment and assumption for $q$.  Define
the coloring $r$ by $r(\langle n,m \rangle ) = \langle p(n),q(m)\rangle$.  If $n_0$ is the last time some color
$c_0$ appears in $p$, and $n_1$ is the last time that some color $c_1$ appears in $q$, then
$\langle n_0 , n_1 \rangle$ is the last time that $\langle c_0 , c_1 \rangle$ appears in $r$.  Conversely, if $\langle c_0, c_1\rangle$ appears for
the last time at position $\langle n_0 , n_1\rangle$, then $c_0$ must appear last in $p$ at $n_0$, and $c_1$ must
appear last in $q$ at $n_1$.  Thus, solutions for $p$ and $q$ can be extracted from the solution for $r$.

Iterated applications of the idempotence of $\minect$ (or an application of Proposition 4.4 of \cite{bgp}) show that
$\minect^* \lew \minect$.  Because $\isinf \lew \minect$, we have $\isinf^* \lew \minect^* \lew \minect$.  We have
already shown that $\wtcn^* \lew \minect$, so
$\wtcn^* \times \isinf^* \lew \minect \times \minect \lew \minect$, completing the proof of the theorem.
\end{proof}

The next result assists in separating $\ect$ and $\minect$.

\begin{thm}\label{isitc}
$\isinf \not \lew \wtcn^*$.
\end{thm}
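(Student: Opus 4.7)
Suppose for contradiction that $\isinf \lew \wtcn^*$ via computable functionals $\Phi$ and $\Psi$. I will specialize to the trivial ``no'' input $p_0 = 0^\omega$, for which $\isinf(p_0) = 0$, and derive a contradiction from a suitable perturbation with infinitely many $1$'s. Running the reduction on $p_0$ produces a tuple $\Phi(p_0) = (e_1,\ldots,e_k)$ of $\wtcn$ instances enumerating the complements of sets $A_1,\ldots,A_k \subseteq \nat$. For any valid solution tuple $\vec s = (s_1,\ldots,s_k)$---with $s_i \in A_i$ whenever $A_i$ is nonempty, and $s_i$ arbitrary otherwise---correctness forces $\Psi(p_0, \vec s) = 0$. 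By continuity of $\Psi$, once $\vec s$ is fixed this value depends only on a finite prefix $p_0 \upharpoonright N$, and (by taking $N$ large enough) we may assume $k$ is the same on any input agreeing with $p_0$ on $[0,N)$.

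The key step is to exhibit $q \in 2^\omega$ together with the chosen $\vec s$ such that $q \upharpoonright N = 0^N$, $\isinf(q) = 1$, and $\vec s$ is still a valid solution tuple for $\Phi(q)$. Such a $q$ immediately yields a contradiction: continuity gives $\Psi(q, \vec s) = \Psi(p_0, \vec s) = 0$, whereas correctness demands $\Psi(q, \vec s) = \isinf(q) = 1$.

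To obtain $q$ and $\vec s$ together, I would run a stagewise construction along the tree of extensions of $0^N$. At each stage, having committed to a finite prefix $\tau$ of $q$, I consult the partial enumerations $e_i^\tau$ that $\Phi$ has produced from $\tau$, pick each $s_i$ (if not yet fixed) to be a number above every value so far enumerated by $e_i^\tau$, and extend $\tau$ by a single $1$ followed by enough $0$'s for $\Phi$ to settle on further output. The hard part is the adversarial possibility that, on reading more of $q$, $\Phi$ eventually enumerates some prescribed $s_i$; I would handle this by combining the adaptive choice of $\vec s$ with a K\"onig-style argument on the finitely-branching tree of feasible partial extensions, extracting an infinite path $q$ with infinitely many $1$'s along which each $s_i$ stays outside $\mathrm{range}(e_i^q)$. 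Continuity of $\Phi$ is used essentially here: between stages only finitely many new values can enter any given enumeration, which is what keeps the diagonalization feasible.
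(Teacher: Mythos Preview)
Your overall strategy---fix a solution tuple $\vec s$ for $\Phi(p_0)$, read off the use $N$ of $\Psi(p_0,\vec s)$, then build $q$ with infinitely many $1$'s agreeing with $p_0$ on $[0,N)$ for which $\vec s$ is still valid---is the right shape, and it is essentially what the paper does. But your execution of ``the hard part'' has a genuine gap. First there is a circularity in the exposition: $N$ depends on $\vec s$, yet you propose to construct $q$ as an extension of $0^N$ while choosing $\vec s$ adaptively along the way. Until $\vec s$ is final you do not know how long $q$ must agree with $p_0$, and once you place a $1$ in $q$ you may already have violated a larger $N(\vec s)$. (You also need $\vec s$ to be a valid solution of $\Phi(p_0)$, not just of $\Phi(q)$, and your construction only looks at the enumerations along $q$.)

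More seriously, the goal you set for the K\"onig argument---keeping each $s_i$ outside $\mathrm{range}(e_i^q)$---can be flatly impossible. Nothing stops $\Phi$ from eventually enumerating \emph{every} natural number into some $e_i$ along any input containing infinitely many $1$'s; ``only finitely many new values per stage'' does not help over infinitely many stages. Then no choice of $s_i$ survives and your tree of ``feasible partial extensions'' dies out. The observation you are missing is that this bad case is harmless: if $e_i^q$ enumerates all of $\nat$ then the corresponding $\wtcn$-instance has empty target set and \emph{any} $s_i$ is a valid output. The paper's proof makes this dichotomy the engine of the argument. It first passes to a stem $\sigma_1$ below which each coordinate $i$ either admits a permanently safe value $m_i$ (never enumerated along any extension of $\sigma_1$) or has the property that enumeration of every value is dense below $\sigma_1$. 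The safe $m_i$'s are fixed once and for all, the remaining coordinates are filled in arbitrarily for $p=\sigma_1 0^\omega$, the use $\sigma_2$ of $\Psi$ is read off, and only then is $q$ chosen---as a $1$-generic extending $\sigma_2$, which forces the dense coordinates to have empty target set so that the already-fixed tuple remains valid. This order (fix the answers, then the use, then the input) dissolves the circularity, and genericity is the correct replacement for your compactness sketch.
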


\begin{proof}
Suppose by way of contradiction that $\Phi$ and $\Psi$ witness $\isinf  \lew \wtcn^*$.
The function mapping sequences $p$ to the number of instances of $\wtcn$ in $\Phi (p)$ is
computable and therefore continuous.  Let $\sigma_0$ and $n$ be such that $\Phi (p)$ consists
of $n$ instances of $\wtcn$ for all $p \succcurlyeq\sigma_0$, that is for all $p$ extending $\sigma_0$.
Denote the ranges of these instances by $\Phi(p)_1 , \dots , \Phi(p)_n$.
For $i \le n$ and $m \in \nat$, define $C_{m,i} = \{\sigma \succcurlyeq \sigma_0 \mid m \in \Phi(\sigma )_i \}$.
Let $\sigma_1$ be an extension of $\sigma_0$ such that for each $i \in [1,n]$, either every
$C_{m,i}$ is dense below $\sigma _1$, or there is an $m_i$ such that $C_{m_i, i}$ contains no extension
of $\sigma_1$.  Let $F$ be the set of all $i$ such that $m_i$ is defined.

Let $p$ consist of $\sigma_1$ followed by an infinite sequence of zeros.  The sequence $p$ has finitely many ones.
There is a solution $(a_1 , \dots , a_n )$ of $\Phi (p)$ such that $a_i = m_i$ for all $i \in F$.
Then $\Psi (a_1 , \dots , a_n , p)$ returns $0$, with a computation that depends only on $a_1 , \dots, a_n$
and a finite initial segment $\sigma_2$ of $p$.
Let $g \succ \sigma_2$ be 1-generic.
If $i \notin F$, then for every $m$, $C_{m , i}$ is dense below $\sigma_2$, so $\Phi(g)_i = \nat$.
Thus, $(a_1, \dots , a_n )$ is a solution of $\Phi (g)$.  But
$\Psi (a_1 , \dots , a_n , g) = \Psi (a_1 , \dots , a_n , p ) = 0$ and $g$ has infinitely many ones,
yielding the desired contradiction.
\end{proof}

\begin{thm}\label{penul}  
$\ect \ltw \minect$ and $\rcas$ proves $\ect \leftrightarrow \minect$.
\end{thm}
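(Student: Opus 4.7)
The statement has two parts, a Weihrauch separation $\ect \ltw \minect$ and a reverse-mathematics equivalence over $\rcas$. In both parts one direction is immediate: $\ect \lew \minect$ is witnessed by the identity functionals, and $\minect \to \ect$ holds trivially over $\rcas$, since every solution of $\minect$ is automatically a solution of $\ect$. The work therefore lies in the non-reducibility $\minect \not\lew \ect$ and the implication $\ect \to \minect$ over $\rcas$.

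The Weihrauch non-reducibility chains the preceding theorems. Suppose toward contradiction that $\minect \lew \ect$. By Theorem~\ref{W2}, $\ect \eqw \wtcn^*$, and by Theorem~\ref{minect}, $\minect \eqw \wtcn^* \times \isinf^*$. Combining these, $\isinf \lew \isinf^* \lew \wtcn^* \times \isinf^* \eqw \minect \lew \wtcn^*$, which contradicts Theorem~\ref{isitc}. Hence $\minect \not\lew \ect$.

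For the reverse-mathematics implication $\ect \to \minect$ over $\rcas$, I plan to bootstrap through induction. Theorem~\ref{D4} delivers $\sfi\szt$ from $\ect$, so I may work in $\rca + \sfi\szt$. Given $f : \nat \to k$, the set $T = \{ j < k \mid \forall n\, \exists x > n\,(f(x) = j)\}$ of colors appearing infinitely often exists by bounded $\pzt$ comprehension, and the $\pzo$-bounding principle, exactly as in the proof of Theorem~\ref{D4}, produces a bound $b$ such that $f(x) \in T$ for every $x > b$. The set $V = \{ n \le b \mid f(n) \notin T \}$ then exists by bounded $\szz$ comprehension, and the candidate minimum bound is $b_0 = \max V + 1$ if $V$ is nonempty, and $b_0 = 0$ otherwise.

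The key step is verifying that $b_0$ is genuinely the minimum $\ect$-bound. Above $b_0$ every value of $f$ lies in $T$, so each $f(x)$ has infinitely many later recurrences and $\ect$ holds. For minimality, when $V$ is nonempty, $M = \max V$ must be the last occurrence of the color $f(M)$: no occurrence of a color outside $T$ lies above $b$, and $M$ is the largest exception in $[0,b]$, so for any $b' \le M$ the point $x = M$ violates the $\ect$-condition at $b'$. The main obstacle to watch is bookkeeping the induction level: the minimum-bound search collapses to a bounded $\szz$ computation only because the $\pzt$ comprehension producing $T$ and the $\pzo$-bounding producing $b$ are already in hand from Theorem~\ref{D4}, so no separate appeal to $\sfi\szt$ is needed at the search stage.
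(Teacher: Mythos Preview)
Your proof is correct and follows essentially the same route as the paper. The Weihrauch separation is argued identically, chaining Theorems~\ref{W2}, \ref{minect}, and \ref{isitc}. For the reverse-mathematics direction you bootstrap through Theorem~\ref{D4} to obtain $\sfi\szt$, build $T$ and $b$ as in that proof, and then locate the minimum bound; the paper does the same but phrases the final search as an application of the $\pzo$ least element principle, whereas you compute $\max V + 1$ explicitly via bounded $\szz$ comprehension --- these are two ways of describing the same bounded search.
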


\begin{proof}
Trivially, $\ect \lew \minect$.
To prove the strict inequality, suppose by contradiction that $\minect \lew \ect$.  By Theorem~\ref{minect}, $\isinf \lew \minect$, so by
Theorem~\ref{W2}, $\isinf \lew \wtcn^*$, contradicting Theorem~\ref{isitc}.

Shifting focus to reverse mathematics, trivially $\rca^*$ proves that $\minect$ implies $\ect$.
For the converse, assuming $\ect$, by Theorem~\ref{D4}, we may use $\Sigma^0_2$ induction.
By
the $\Pi^0_1$ least element principle (a consequence of $\Sigma^0_1$ induction), a minimal
bound can be found in the first part of the proof of Theorem~\ref{D4}.  Thus, over $\rca^*$, $\ect$
is equivalent to $\minect$.
\end{proof}

Theorem~\ref{penul} demonstrates the ability
of Weihrauch reductions to make finer distinctions in this setting.

Our final result links $\minect$ to principles considered by Hirst and Mummert \cite{hm}.
The principle $\cmh$ takes as inputs a size $n$ and the enumeration of the complement of a collection of finite
subsets of $\nat$, each of size at most $n$, and outputs an element of the collection of maximum cardinality.

\begin{thm}\label{last}
$\minect \eqw \cmh$.
\end{thm}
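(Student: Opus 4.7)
I will establish both reductions separately: $\minect \lew \cmh$ by a direct construction, and $\cmh \lew \minect$ by invoking the characterization $\minect \eqw \wtcn^* \times \isinf^*$ of Theorem~\ref{minect}.

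For $\minect \lew \cmh$, given a coloring $f : \nat \to k$, I will set $n = k$ and take the $\cmh$ collection $\mathcal{A}$ to consist of all finite $S \subseteq \nat$ of size at most $k$ such that $f(y) \neq f(x)$ for every $x \in S$ and every $y > x$. Writing $S^* := \{x \in \nat : f(y) \neq f(x) \text{ for all } y > x\}$ for the set of last occurrences of colors that appear only finitely often, I expect to verify that $\mathcal{A}$ is precisely the power set of $S^*$, so the unique maximum-cardinality element of $\mathcal{A}$ is $S^*$ itself. The complement of $\mathcal{A}$ inside the family of size-$\le k$ finite subsets of $\nat$ is computably enumerable in $f$: at stage $t$, enumerate every such set that contains some $x$ for which an equal-colored $y$ with $x < y \le t$ has appeared. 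From the $\cmh$ output $S^*$, the minimum $\ect$-bound is $0$ if $S^* = \emptyset$ and $1 + \max S^*$ otherwise.

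For $\cmh \lew \minect$, by Theorem~\ref{minect} it suffices to show $\cmh \lew \wtcn^* \times \isinf^*$. Given $(n,e)$ and $m \le n$, I will let $B_m$ be the $\pzo$ set of codes of subsets $S \subseteq \nat$ with $m \le |S| \le n$ that are never enumerated by $e$. For each such $m$ I will apply one copy of $\wtcn$ to $B_m$ (via the natural enumeration of its complement) to obtain a candidate $s_m$. To detect when $B_m$ is nonempty, I will feed $\isinf$ the binary sequence that records the jumps of the non-decreasing function $t \mapsto \min B_m^{[t]}$: this function is bounded iff $B_m \neq \emptyset$, iff the jump sequence has only finitely many ones. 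The largest $m^*$ for which the $\isinf$ output signals $B_{m^*} \neq \emptyset$ is then the maximum cardinality attained in $\mathcal{A}$, and $s_{m^*}$ codes a witness of that size.

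The main point to justify is the equality $\mathcal{A} = 2^{S^*}$ in the first reduction: the nontrivial inclusion relies on the observation that if $x \in S$ does not lie in $S^*$, then some later $y$ with $f(y) = f(x)$ must exist, forcing $S$ into the enumerated complement, while the reverse inclusion is immediate from the defining condition. Converting $\pzo$-nonemptiness of $B_m$ into an $\isinf$ query in the second reduction is a standard move but deserves brief verification; the remaining bookkeeping, including the encoding of finite subsets of $\nat$ as natural numbers, is routine.
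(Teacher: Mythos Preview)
Your proof is correct and follows essentially the same strategy as the paper's: for $\minect \lew \cmh$ both you and the paper take as the $\cmh$ collection the family of (encodings of) sets of last-occurrence positions of finitely-appearing colors, and for $\cmh \lew \wtcn^* \times \isinf^*$ both use one $\isinf$/$\wtcn$ pair per possible size to detect the maximum attained cardinality and extract a witness. Your second reduction is slightly more direct---you apply $\wtcn$ straight to the $\Pi^0_1$ class $B_m$ and feed $\isinf$ the jump sequence of $t \mapsto \min B_m^{[t]}$---whereas the paper works with exact sizes rather than lower bounds and routes the $\wtcn$ call through a moving-marker ``count the ones'' construction; the difference is cosmetic.
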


\begin{proof}
From Theorem~\ref{minect} we know $\minect \eqw \wtcn^* \times \isinf^*$, so it suffices to show that
\[
\minect \lew \cmh \lew \wtcn^* \times \isinf^*.
\]
For the first reduction, suppose $f: \nat \to k$ is an instance of $\minect$.
Consider the set $A$ of all finite sets $F \subset k \times \nat$ such that for each $j<k$,
if $(j,n) \in F$ then $n$ is the maximum natural number such that $f(j) = n$.  Here we are
identifying pairs with their integer codes, so $F$ can be viewed as a subset of $\nat$.
The set $A$ is $\Pi^0_1$ definable using $f$ as a parameter, and its complement can
be enumerated by a function uniformly computable from $f$.  Use this enumeration
and the size $k$ as the input for $\cmh$, and let $F_0$ be the resulting maximal
output set.  Adding $1$ to the maximum of the second coordinates of the elements of $F_0$
yields the desired bound for $\minect$.

To prove the final reduction, it is useful to note that $\wtcn$ can be used to count the numbers
of ones in a binary string.  Using a bijective pairing function, given a sequence $p : \nat \to 2$,
we can define an enumeration $q$ of the (codes for) pairs that omits at most one pair, so that
the first coordinate of that omitted pair corresponds to the number of ones in the range of $p$, provided that number is finite.
Calculation of $q$ can be viewed as a moving marker process.  Place a marker on $(0,0)$ and
then alternate enumerating unmarked pairs and calculating values of $p$ until a $1$ appears in the range of $p$.  Move
the marker to the first non-enumerated pair with an initial coordinate of $1$, enumerate $(0,0)$,
and continue enumerating unmarked pairs and calculating values of $p$ until the next $1$ appears in the range of $p$.
Iterate.  If there
are infinitely many ones in the range of $p$, then $q$ will enumerate all pairs.  If only finitely
many ones appear, $\wtcn$ applied to $q$ will find a pair with the desired first coordinate.

To prove that $\cmh\lew \wtcn^* \times \isinf^*$, let $f : \nat \to \nat^{<\nat}$ enumerate the complement
of a set $A$ of finite subsets of $\nat$, each of size less than $k$.  For each positive $i<k$, let
$e_i$ be an enumeration of all the subsets of $\nat$ of size exactly $i$.  For each positive $i<k$, define
the instance $p_i$ of $\isinf$ as follows.
Set $p_i (n) = 1$ if there is a $t \le n$ such that $f(t) = e_i (c_t )$ where
$c_t = |\{j<n \mid p_i (j) = 1\}|$, and set $p_i (n)=0$ otherwise.  Thus $f$ enumerates all sets of size $i$ if and only if the range of $p_i$ contains
infinitely many ones, and the range of $p_i$ contains a total of $n$ ones if and only if $e_i (n)$ is the
first set enumerated by $e_i$ that is in $A$.  For each $p_i$, let $q_i$ be the associated instance
of $\wtcn$ that counts the ones in the range of $p_i$.  Given the solutions to $\isinf$ for each $p_i$ and
to $\wtcn$ for each $q_i$ for all $i<k$, we can find the maximum $j$ such that $\isinf$ fails for $p_j$.
If $n$ is the output from $\wtcn$ for $q_j$, then $e_j (n)$ is a maximal element of $A$, solving the
instance $\cmh$ corresponding to $f$.
\end{proof}

Hirst and Mummert \cite{hm} showed that $\cmh$ is Weihrauch equivalent and provably equivalent over $\rca$
to several principles formalizing calculation of bases for bounded dimension matroids and vector spaces,
and finding connected component decompositions of graphs with finitely many components.  Thus $\minect$
is Weihrauch equivalent to all these principles, $\ect$ is strictly Weihrauch weaker, and all of them are provably equivalent
over $\rca$.

\subsubsection*{Acknowledgements}

Discussions contributing to this work occurred at the Workshop on Ramsey Theory and Computability,
organized by Peter Cholak and held July 9-13 of 2018 at the Rome Notre Dame Global Gateway.
Further discussions occurred at Dagstuhl
Seminar 18361,
organized by Vasco Brattka, Damir Dzhafarov, Alberto Marcone, and Arno Pauly,
and held September 2-7 of 2018 at Schloss Dagstuhl,
the Leibniz-Zentrum f\"u{}r Informatik.
Denis Hirschfeldt was partially supported by grant DMS-1600543 from the National Science Foundation of the United States.
Jeffry Hirst's travel was supported by
a SAFE grant from the College of Arts and Sciences, a grant from the Office of International
Education and Development, and a Board of Trustees travel grant, all from
Appalachian State University.
Yokoyama was partially supported by JSPS KAKENHI (grant numbers
16K17640 and 15H03634), JSPS Core-to-Core Program
(A.~Advanced Research Networks) and JAIST Research Grant 2018 (Houga).

\begin{bibsection}[Bibliography]
\begin{biblist}

\bib{bg}{article}{
   author={Brattka, Vasco},
   author={Gherardi, Guido},
   title={Effective choice and boundedness principles in computable
   analysis},
   journal={Bull. Symbolic Logic},
   volume={17},
   date={2011},
   number={1},
   pages={73--117},
   issn={1079-8986},
   review={\MR{2760117}},
   doi={10.2178/bsl/1294186663},
}

\bib{bgp}{article}{
  author={Brattka, Vasco},
   author={Gherardi, Guido},
   author={Pauly, Arno},
   title={Weihrauch Complexity in Computable Analysis},
   year={2017},
   pages={50+xi},
   eprint={arXiv:1707.03202},
}

\bib{csy}{article}{
   author={Chong, C. T.},
   author={Slaman, Theodore A.},
   author={Yang, Yue},
   title={The metamathematics of stable Ramsey's theorem for pairs},
   journal={J. Amer. Math. Soc.},
   volume={27},
   date={2014},
   number={3},
   pages={863--892},
   issn={0894-0347},
   review={\MR{3194495}},
}

\bib{cgm}{article}{
   author={Corduan, Jared},
   author={Groszek, Marcia J.},
   author={Mileti, Joseph R.},
   title={Reverse mathematics and Ramsey's property for trees},
   journal={J. Symbolic Logic},
   volume={75},
   date={2010},
   number={3},
   pages={945--954},
   issn={0022-4812},
   review={\MR{2723776}},
   doi={10.2178/jsl/1278682209},
}

\bib{davisetal}{article}{
author={Davis, Caleb},
author={Hirst, Jeffry},
author={Pardo, Jake},
author={Ransom, Tim},
title={Reverse mathematics and colorings of hypergraphs},
date={November 29, 2018},
journal={Archive for Mathematical Logic},
DOI={10.1007/s00153-018-0654-z},
}

\bib{hirst}{article}{
   author={Hirst, Jeffry L.},
   title={Disguising induction: proofs of the pigeonhole principle for
   trees},
   conference={
      title={Foundational adventures},
   },
   book={
      series={Tributes},
      volume={22},
      publisher={Coll. Publ., London},
   },
   date={2014},
   pages={113--123},
   review={\MR{3241956}},
}

\bib{hm}{article}{
   author={Hirst, Jeffry L.},
   author={Mummert, Carl},
   title={Reverse mathematics of matroids},
   conference={
      title={Computability and complexity},
   },
   book={
      series={Lecture Notes in Comput. Sci.},
      volume={10010},
      publisher={Springer, Cham},
   },
   date={2017},
   pages={143--159},
   review={\MR{3629720}},
}

\bib{np}{article}{
   author={Neumann, Eike},
   author={Pauly, Arno},
   title={A topological view on algebraic computation models},
   journal={J. Complexity},
   volume={44},
   date={2018},
   pages={1--22},
   issn={0885-064X},
   review={\MR{3724808}},
   doi={10.1016/j.jco.2017.08.003},
}

\bib{py}{article}{
   author={Patey, Ludovic},
   author={Yokoyama, Keita},
   title={The proof-theoretic strength of Ramsey's theorem for pairs and two
   colors},
   journal={Adv. Math.},
   volume={330},
   date={2018},
   pages={1034--1070},
   issn={0001-8708},
   review={\MR{3787563}},
   doi={10.1016/j.aim.2018.03.035},
}

\bib{pauly}{article}{
   author={Pauly, Arno},
   title={On the (semi)lattices induced by continuous reducibilities},
   journal={MLQ Math. Log. Q.},
   volume={56},
   date={2010},
   number={5},
   pages={488--502},
   issn={0942-5616},
   review={\MR{2742884}},
   doi={10.1002/malq.200910104},
}

\bib{simpson}{book}{
   author={Simpson, Stephen G.},
   title={Subsystems of second order arithmetic},
   series={Perspectives in Logic},
   edition={2},
   publisher={Cambridge University Press, Cambridge; Association for
   Symbolic Logic, Poughkeepsie, NY},
   date={2009},
   pages={xvi+444},
   isbn={978-0-521-88439-6},
   review={\MR{2517689}},
   doi={10.1017/CBO9780511581007},
}

\bib{ss}{article}{
   author={Simpson, Stephen G.},
   author={Smith, Rick L.},
   title={Factorization of polynomials and $\Sigma^0_1$ induction},
   note={Special issue: second Southeast Asian logic conference (Bangkok,
   1984)},
   journal={Ann. Pure Appl. Logic},
   volume={31},
   date={1986},
   number={2-3},
   pages={289--306},
   issn={0168-0072},
   review={\MR{854297}},
   doi={10.1016/0168-0072(86)90074-6},
}

\end{biblist}
\end{bibsection}

\newpage

\begin{list}{}{\setlength{\leftmargin}{0pt}}
\item
\begin{list}{}{\setlength\itemsep{0pt}\setlength\parsep{0pt}}
\item Caleb Davis
\item Department of Mathematical Sciences, Appalachian State University
\item ASU Box 32092, Boone, NC 28608  USA
\item davisc3@appstate.edu
\end{list}
\item
\begin{list}{}{\setlength\itemsep{0pt}\setlength\parsep{0pt}}
\item Denis R. Hirschfeldt
\item Department of Mathematics, The University of Chicago
\item 5734 S. University Ave., Chicago, IL 60637 USA
\item drh@math.uchicago.edu
\end{list}
\item
\begin{list}{}{\setlength\itemsep{0pt}\setlength\parsep{0pt}}
\item Jeffry Hirst
\item Department of Mathematical Sciences, Appalachian State University
\item ASU Box 32092, Boone, NC 28608 USA
\item hirstjl@appstate.edu
\end{list}
\item
\begin{list}{}{\setlength\itemsep{0pt}\setlength\parsep{0pt}}
\item Jake Pardo
\item Department of Mathematical Sciences, Appalachian State University
\item ASU Box 32092, Boone, NC 28608 USA
\item jpardo@alumni.nd.edu
\end{list}
\item
\begin{list}{}{\setlength\itemsep{0pt}\setlength\parsep{0pt}}
\item Arno Pauly
\item Department of Computer Science, Swansea University
\item Singleton Park, Swansea SA2 8PP  United Kingdom
\item arno.m.pauly@gmail.com
\end{list}
\item
\begin{list}{}{\setlength\itemsep{0pt}\setlength\parsep{0pt}}
\item Keita Yokoyama
\item School of Information Science
\item Japan Advanced Institute of Science and Technology
\item 1-1 Asahidai, Nomi, Ishikawa 923-1292 Japan
\item y-keita@jaist.ac.jp
\end{list}
\end{list}

\end{document}